\numberwithin{equation}{section}
\newtheorem{theorem}{Theorem}[section]
\newtheorem{lemma}[theorem]{Lemma}
\newtheorem{proposition}[theorem]{Proposition}
\newtheorem{remark}{Remark}[section]
\newcommand{\R}{\mathbb R}
\newcommand{\C}{\mathbb C}
\newcommand{\eps}{\varepsilon}
\newcommand{\wt}{\widetilde}
\newcommand{\wh}{\widehat}
\newcommand{\half}{\frac{1}{2}}
\newcommand{\curl}{\nabla\times}
\newcommand{\dvg}{\nabla\cdot}
\newcommand{\grad}{\nabla}
\newcommand{\Lap}{\Delta}
\newcommand{\Ht}{{\tt H}}
\newcommand{\des}{\delta\sigma}
\newcommand{\den}{\delta n}
\newcommand{\deq}{\delta q}
\newcommand{\deE}{\delta E}
\newcommand{\deH}{\delta H}
\newcommand{\cout}[1]{}
\newcommand{\mA}{{\mathcal A}}
\newcommand{\mF}{{\mathcal F}}
\newcommand{\mH}{{\mathcal H}}
\newcommand{\mS}{{\mathcal S}}
\newcommand{\dd}{\mathrm{d}} 
\newcommand{\norm}[3]{\left\|#1\right\|^{#2}_{#3}} 
\newcommand{\Duality}[2]{\left\langle #1 \Big| #2 \right\rangle} 
\newcommand{\inner}[2]{\left\langle #1, #2 \right\rangle} 
\newcommand{\HOX}[1]{\marginpar{\footnotesize #1}}
\newcommand{\ndim}{\mathfrak{n}}
\author{Guillaume Bal
\and Ting Zhou
}
\title[]{Hybrid Inverse problems for a system of Maxwell's equations}
\keywords{Thermo-acoustic Tomography, Maxwell's equations, Internal functionals, Stability}
\begin{document}

\begin{abstract}
This paper concerns the quantitative step of the medical imaging modality Thermo-acoustic Tomography (TAT). We model the radiation propagation by a system of Maxwell's equations. We show that the index of refraction of light and the absorption coefficient (conductivity) can be uniquely and stably reconstructed from a sufficiently large number of TAT measurements. Our method is based on verifying that the linearization of the inverse problem forms a redundant elliptic system of equations. We also observe that the reconstructions are qualitatively quite different from the setting where radiation is 
modeled by a scalar Helmholtz equation as in \cite{BRUZ-IP-11}.
\end{abstract}

\maketitle

\setcounter{tocdepth}{1}

\section{Introduction}

Thermo-acoustic tomography (TAT) is a medical imaging technique that belongs to the class of coupled-physics imaging modalities. As electromagnetic waves (micro-waves with wavelengths typically of order $1m$) propagate through a domain we wish to probe, some radiation is absorbed, heats up the underlying tissues, creates a small mechanical expansion and hence generates some ultrasound that propagates to the boundary of the domain where they are measured by transducers. The first step  of TAT consists of reconstructing the map of absorption radiation from the ultrasound measurements. It is typically modeled by an inverse source wave problem and has been analyzed in detail in many works that include \cite{FR-CRC-09,FPR-JMA-04,HSS-MMAS-05,HKN-IP-08,KK-EJAM-08,PS-IP-07,SU-IP-09,SU-IO-12,XW-RSI-06}. This paper concerns the second, quantitative, step, which aims to reconstruct the optical properties of the tissues from knowledge of the (non-quantitative) absorption maps obtained during the first step.

TAT is an example of a coupled-physics modality, which combines the high contrast of a physical phenomenon (here the electrical properties of tissues) with the high resolution of another phenomenon (here ultrasound). Other coupled-physics modalities have been explored experimentally and analyzed mathematically, sometimes under the name of hybrid inverse problems. For a very incomplete list of works on these problems in the mathematical literature, we refer the reader to  \cite{BR-IP-11,BRUZ-IP-11,BU-CPAM-12,BU-IP-10,BU-AML-12}  in the quantitative step of the imaging modalities Photo-acoustic tomography, Thermo-acoustic tomography, Transient Elastography, and Magnetic Resonance Elastography and to  \cite{ABCTF-SIAP-08,B-APDE-13,BBMT-13,BGM-IP-13,BGM-IPI-13,BM-LINUMOT-13,BS-PRL-10,CFGK-SJIS-09,GS-SIAP-09,KK-AET-11,MB-aniso-13,MB-IP-12,MB-IPI-12} for works in  Ultrasound Modulated tomography and in Current Density Imaging.

In TAT, radiation is modeled by an electromagnetic field that satisfies the time-harmonic system of Maxwell's equations  (assuming a constant magnetic permeability)
\begin{equation}\label{eqn:ME-1-0}
-\curl\curl E+(\omega^2 n +i\omega\sigma) E =0\quad\mbox{in }\;\Omega,\quad
E\times \nu_{|\partial\Omega} = f,
\end{equation}
where $\omega$ is the frequency, $n$ the index of refraction and $\sigma$ the conductivity, $\Omega$ an open bounded domain in $\R^3$ with boundary $\partial\Omega$, $\nu$ the outward unit normal on $\partial\Omega$, and $f$ a boundary condition (illumination). The amount of absorbed radiation by the underlying tissue is given by $H(x)\equiv H_f(x)=\sigma(x)|E|^2(x)$ for $x\in\Omega$. The quantitative step of TAT (QTAT) concerns the reconstruction of $(n(x),\sigma(x))$ from knowledge of $\{H_j(x)=H_{f_j}(x)\}_{1\leq j\leq J}$ obtained by probing the medium with the $J$ illuminations $f_j$.

Our main result is that for $J$ sufficiently large, $(n(x),\sigma(x))$ can be uniquely and stably reconstructed from $\{H_j\}_{1\leq j\leq J}$ with no loss of derivatives. We also note the following result. The reconstruction of $\sigma(x)$ with $n(x)$ constant and known was addressed in \cite{BRUZ-IP-11}. It was shown there that $\sigma$ could be uniquely and stably reconstructed from one (well-chosen) measurement $H(x)$ provided that $\sigma$ was sufficiently small (compared to $\omega$).  In that paper, it was also shown that in a scalar Helmholtz model for radiation propagation $\Delta u + (\omega^2n+i\omega\sigma(x))u=0$, $\sigma$ could uniquely be reconstructed from knowledge of $H(x)=\sigma(x)|u(x)|^2$ without any smallness constraint on $\sigma$. Surprisingly, the latter result does not extend to the setting of Maxwell's system \eqref{eqn:ME-1-0}. We rather obtain that the reconstruction of $\sigma$ from $H(x)=\sigma|E|^2$ is very similar to the inversion of the $0-$Laplacian that finds applications in ultrasound modulation and is analyzed in \cite{B-APDE-13}. There, it is shown that $\sigma$ is partially reconstructed from knowledge of $H$ with the loss of one derivative. 

Some of our main conclusions are therefore that: (i) the Helmholtz equation is a qualitatively different model for radiation propagation in the context of QTAT; and (ii) both $\sigma$ and $n$ can uniquely and stably be reconstructed from $\{H_j\}_{1\leq j\leq J}$ with no loss of derivatives for well-chosen illuminations $\{f_j\}_{1\leq j\leq J}$.

The rest of the paper is structured as follows. Section \ref{sec:results} introduces the mathematical problem and presents our main results.  The analysis of the linearized inverse problem is carried out in \ref{sec:ellipticity}. The proof of ellipticity used in section \ref{sec:ellipticity} hinges on the analysis of complex geometric optics solutions for the system of Maxwell's equations that is carried out in \ref{sec:CGO}. 

\section{Presentation of QTAT and main results}
\label{sec:results}
Let $\Omega$ be a bounded open subset in $\R^3$ with smooth boundary $\partial\Omega$. The propagation of electromagnetic radiation in TAT is modeled by the system of Maxwell's equations
\begin{equation}
\curl E=-\partial_tB,\qquad \curl \Ht=\partial_t D+J,\quad\mbox{in }\;\Omega
\end{equation}
where $(E, \Ht)$ are the electric and magnetic fields, $(B, D)$ are the magnetic and electric flux density and $J$ represents the electrical current density. We assume linear and isotropic constitutive relations 
\[D=\eps E,\quad B=\mu \Ht,\quad\mbox{and}\quad J=\sigma E,\]
where $(\eps(x), \mu, \sigma(x))$ are scalar functions that characterize relative electric permittivity, magnetic permeability and conductivity of the media. 
Assuming that the magnetic permeability $\mu$ is constant and normalized to $1$, which is an excellent approximation in medical imaging applications, we recast the above system as 
\begin{equation}-\curl\curl E =\sigma\partial_tE+n\partial_t^2E\quad\mbox{in }\;\Omega,\end{equation}
where $n(x)$ can be understood as the square of the refractive index of the medium and satisfies $n={\mu\eps}$ in this context. With time-harmonic sources and the solution given by $E(t, x)=e^{i\omega t}E(x)$, we obtain the equation for $E(x)$
\begin{equation}\label{eqn:ME-1}
-\curl\curl E+(\omega^2 n +i\omega\sigma) E =0\quad\mbox{in }\;\Omega.
\end{equation}

Denote the set of admissible coefficients as
\[L_{\textrm{ad}}^\infty(\Omega):=\left\{(n,\sigma)\in L^\infty(\Omega;\R^2)~|~n(x)\geq C>0,~\sigma(x)\geq 0~\mbox{ for }~x\in\overline\Omega\right\}.\]
Imposing the boundary condition (illumination)
\begin{equation}\label{eqn:bdry-illum}\nu\times E|_{\partial\Omega}=f,\end{equation}
a standard well-posedness theory for Maxwell's equations \cite{colton-kress-98,dlen3} states that given $(n,\sigma)\in L_{\textrm{ad}}^\infty(\Omega)$ and $f\in TH^{1/2}(\partial\Omega)$, the equation \eqref{eqn:ME-1}-\eqref{eqn:bdry-illum} has a unique solution in $ H^1(\Omega;\C^3)$. Here $H^s(\Omega)$ denotes the standard Sobolev space and
\[TH^s(\partial\Omega):=\left\{f\in H^s(\partial\Omega;\C^3)~|~\nu\cdot f=0\right\}\]
where $\nu$ is the unit outer normal vector of $\partial\Omega$. 

Associated to each boundary illumination $f$, the internal map of absorbed radiation in $\Omega$, which is reconstructed from the first step of TAT by solving an inverse ultrasound problem, is given by 
\begin{equation}\label{eqn:H}
H(x)\equiv H_f(x)=\sigma(x)|E(x)|^2,\qquad x\in\Omega,
\end{equation}
where $E$ is the solution of \eqref{eqn:ME-1}-\eqref{eqn:bdry-illum}. The inverse problem of QTAT is then to reconstruct $(n(x),\sigma(x))$ in $\Omega$ from $\{H_j(x):=H_{f_j}(x)\}_{1\leq j\leq J}$ for $f_j$ in a properly chosen set of  boundary illuminations $\{f_j\}_{1\leq j\leq J}$ with $J\geq1$.

In \cite{BRUZ-IP-11}, a scalar model of the Helmholtz equation was studied in place of Maxwell's equations. Assuming $n$ known and constant, it was shown that $\sigma$ could be uniquely reconstructed from the nonlinear internal functional $H[\sigma](x)$ using a fixed-point algorithm.
In the same paper, it was shown that $\sigma$ could also be uniquely reconstructed from \eqref{eqn:ME-1}-\eqref{eqn:H} when $\sigma$ was sufficiently small. However, the proof for the Helmholtz case, based on specific properties of complex geometric optics (CGO) solutions for scalar equations did not extend to the Maxwell case for large values of $\sigma$. That result is in fact incorrect as we shall see. We follow here the method presented in \cite{B-Irvine-13}, see also \cite{BM-LINUMOT-13,KS-IP-12}, and first consider a linearized version of our nonlinear inverse problem. A more abstract functional analysis approach of such type is described in \cite{StU}. 

Let us define $q:=\omega^2n+i\omega\sigma$ and recast \eqref{eqn:ME-1} as 
\begin{equation}\label{eqn:ME-4}
(\Lap-\grad\dvg+q)E=0.\end{equation}
The above operator is not elliptic, but we can render it  elliptic by taking the divergence of the above equation and obtain the redundant system
\[
(\Lap-\grad\dvg+q)E=0,\qquad \dvg(qE)=0.
\]
To make all differential equations  second-order, we have
\begin{equation}\label{eqn:ME-3}
(\Lap-\grad\dvg+q)E=0,\qquad \grad\dvg(qE)=0.
\end{equation}
Assume that $q\neq0$ everywhere, which holds since the index of refraction $n$ does not vanish,
and rewrite the above as the elliptic system
\begin{equation}\label{eqn:ME-2}\left(\Lap+\frac{1}{q}[\grad\dvg,q]+q\right)E=0.\end{equation}
Here, $[A,B]:=AB-BA$ is the usual commutator. 

\begin{remark}
The above elliptic equation can be written in any dimension $\ndim\geq3$, in which case, the differential form notations shall be adopted. More specifically, let $E$ be a differential 1-form defined on $\Omega$, a bounded domain in $\R^\ndim$ ($\ndim\geq 3$). 
Equation \eqref{eqn:ME-2} can be written as 
\[\left(\Lap_H+\frac{1}{q}[d\delta,q]+q\right)E=0,\]
where $\Lap_H=\delta d+d\delta$ is the Hodge-Laplacian operator, $d$ is the exterior derivative and $\delta$ is the formal adjoint operator defined by $\delta =(-1)^{\ndim(l+1)+1}\ast d~\ast $ for $l$-forms. Here, $\ast$ is the Hodge-star operator. In particular, the commutator operator is given as
\[ [d\delta, q]=-d(dq~\vee)+dq\wedge\delta,\]
where $q$ is viewed as a 0-form and operators $\wedge$ and $\vee$ denote the exterior product and its adjoint respectively. 
\end{remark}

Consider the perturbation of $(n,\sigma)\in L^\infty_{\textrm{ad}}(\Omega)$ given by
$\tilde n=n+\epsilon \den,  \tilde \sigma=\sigma+\epsilon\des$,
where $\den,\des\in L^\infty_0(\Omega)$. Correspondingly, we denote $\tilde q=q+\epsilon\deq$ where $q=\omega^2 n+i\omega \sigma$ and $\deq=\omega^2\den+i\omega\des$. For $\epsilon>0$ small, the solution to \eqref{eqn:ME-4} with boundary condition $f_j$ can be written as $\wt E_j=E_j+\epsilon\deE_j+o(\epsilon)$ 
where $E_j$ is the solution with respect to $q$, and $\deE_j\in H_0^1(\Omega;\C^\ndim)$ satisfies
\[
\left(\Delta-\grad\dvg +q\right)\deE_j=-\deq E_j\quad\mbox{in }\;\Omega.
\]
The elliptic version of the above linearization is then given by
\begin{equation}\label{eqn:ME-l-e}
\left(\Delta+\frac{1}{q}[\grad\dvg, q]+q\right)\deE_j=-\deq E_j-\frac{1}{q}\grad\dvg(\deq E_j)\quad\mbox{in }\;\Omega.
\end{equation}
Taking complex conjugates yields
\begin{equation}\label{eqn:ME-l-e-conj}
\left(\Delta+\frac{1}{q^*}[\grad\dvg, q^*]+q^*\right)\deE_j^*=-\deq^* E_j^*-\frac{1}{q^*}\grad\dvg(\deq^* E_j^*)\quad\mbox{in }\;\Omega.
\end{equation}
For the internal functional $H_j=\sigma|\wt E_j|^2$, the Fr\'echet derivative $\deH_j$ obtained in the same manner is given by
\begin{equation}\label{eqn:H-l}
\deH_j=\sigma(\deE_j\cdot E_j^*+E_j\cdot\deE_j^*)+\des|E_j|^2,
\end{equation}
where $z^*$ denotes the complex conjugate of $z$. 

Consider the system of $3J$ differential equations \eqref{eqn:ME-l-e}, \eqref{eqn:ME-l-e-conj} and \eqref{eqn:H-l} for $1\leq j\leq J$ of $2J\ndim+2$ unknowns $\deE_j$, $\deE_j^*$ ($1\leq j\leq J$), $\den$ and $\des$. Our first main result concerns the ellipticity of a boundary value problem for this $(2J\ndim+J)\times (2J\ndim+2)$ matrix differential operator. 

\cout{  
\HOX{go to Appendix?}
For the convenience of analysis, we adopt the standard notations of symbol classes $S^m(\R^d)\ni a(x,\xi)$ for $m$-th order pseudo-differential operators defined by
\[a(x,D)u=\frac{1}{(2\pi)^n}\int\int a(x,\xi)e^{i(x-y)\cdot\xi}u(y)~dy~d\xi\]
where $D=-i\grad$. (The symbol of laplacian is $-|\xi|^2$.)
The symbol of the composition of two operators $b(x,D)a(x,D)$ is hence given by 
\[b\sharp a(x,\xi):=b(x,\xi+D_x)a(x,\xi)=\frac{1}{(2\pi)^n}\int e^{iy\cdot\eta}b(x,\xi-\eta)a(x-y,\xi)~dy~d\eta\]
with principal part being $b(x,\xi)a(x,\xi)$. In particular, the symbols of differential operators are polynomials. A pseudo-differential operator is said to be {\em elliptic} if its principal symbol never vanishes for $x\in\R^d$ and $\xi\in\R^d\backslash\{0\}$. An elliptic symbol $a\in S^m(\R^d)$ admits a parametrix $b\in S^{-m}(\R^d)$, that is,
\[a\sharp b-I\;\mbox{ and }\; b\sharp a-I \;\in S^{-\infty}(\R^d).\]
One can easily generalize the definitions to the case of systems, where the composition $\sharp$ is compatible with corresponding matrix multiplications. 
}

\medskip

For a matrix-valued differential operator $\mA(x,D)=\left(a_{ij}(x,D)\right)_{1\leq i\leq i_{\textrm{max}}, 1\leq j\leq j_{\textrm{max}}}$ ($i_{\textrm{max}}\geq j_{\textrm{max}}$), the principal symbol and the notion of ellipticity are defined in the Douglis-Nirenberg sense in \cite{DN,ADN-ii} for square systems and \cite{So} for redundant systems. More precisely, we assign two sets of integers $\{s_i\}_{1\leq i\leq i_{\textrm{max}}}$ and $\{t_j\}_{1\leq j\leq j_{\textrm{max}}}$ such that $a_{ij}$ has order not greater than $s_i+t_j$ and $a_{ij}=0$ if $s_i+t_j<0$. Then the principal part of $\mA(x,D)$ is the matrix operator $\mA_0(x,D)$ with elements being the leading terms of those $a_{ij}$ whose order is exactly $s_i+t_j$. The operator $\mA(x,D)$ is said {\em elliptic} at $x\in\Omega$ if the rank of $\mA_0(x,\xi)$ equals to $j_{\textrm{max}}$ for all $\xi\in\R^\ndim\setminus\{0\}$ and is said to be elliptic on $\Omega$ if it is elliptic at every $x\in\Omega$.


Given a linear system $\mA(x,D)v(x)=\mS(x)$ in a bounded domain $\Omega$, with sets of integers $\{s_i\}$ and $\{t_j\}$ as above, we consider the boundary condition of the form
\begin{equation}\label{eq:bdry-cond-gen}\mathcal{B}(x,D)v(x)=g(x)\qquad x\in\partial\Omega
\end{equation}
where $\mathcal{B}(x,D)=\left(\mathcal B_{qj}(x,D)\right)_{1\leq q\leq Q, 1\leq j\leq l}$ is a matrix differential operator. The principal part $\mathcal B_{qj}^{(0)}$ of $\mathcal B_{qj}$ is defined as the sum of all terms of order $\varrho_q+t_j$, where $\varrho_q=\max_j(\beta_{qj}-t_j)$ and $\beta_{qj}$ is the order of $\mathcal B_{qj}$. Then the principal part $\mathcal{B}_0(x,D)$ of $\mathcal{B}(x,D)$ is the matrix with elements $\mathcal B_{qj}^{(0)}$.

According to \cite{So}, the condition \eqref{eq:bdry-cond-gen} is called the {\em complementing boundary condition} to the system, and we then say that $\mathcal{B}$ covers $\mA(x,D)$, if it satisfies the Lopatinskii criterion \cite{Lo}: for any $y\in\partial\Omega$ and a nonzero tangential vector $\zeta$ at $y$ to $\partial\Omega$, the one dimensional boundary-value problem
\begin{equation}\label{eqn:BVP-1d}
\left\{\begin{split}
& \mA_0\big(y,\zeta+i\nu(y)\frac{d}{dz}\big)\wt u(z)=0,\qquad z>0,\\
&\mathcal B_0\big(y,\zeta+i\nu(y)\frac{d}{dz}\big)\wt u(z)\Big|_{z=0}=0,\\
&\wt u(z)\rightarrow 0,\qquad\mbox{as }z\rightarrow\infty
\end{split}\right.\end{equation}
has a unique solution $u=0$. Here $\nu(y)$ is the outer normal unit vector to $\partial\Omega$. 

We say that a boundary value problem is {\em elliptic} if the corresponding matrix operator is elliptic and the boundary condition is complementing.

In \cite{ADN-ii, So}, Schauder type estimates are generalized to the setting of elliptic systems of boundary value problems. For our hybrid inverse problems, these regularity estimates indicates how errors in the internal maps propagate to errors in the reconstruction of the parameters, provided that we can show (independently) the uniqueness of the reconstruction.
To this end, we apply the differential operator $\Lap$ to \eqref{eqn:H-l} and using \eqref{eqn:ME-l-e} and \eqref{eqn:ME-l-e-conj}, obtain that
\begin{equation}\label{eqn:Lap-H-l}
|E_j|^2\Lap\des-\frac{\sigma_0}{q_0}E_j^*\cdot(E_j\cdot\grad\grad\deq)-\frac{\sigma_0}{q_0^*}E_j\cdot(E_j^*\cdot\grad\grad\deq^*)+\textrm{l.o.t.}=\Lap\deH_j,\end{equation}
where l.o.t. represents lower order differential terms (of order $1$ or $0$). We point out that \eqref{eqn:Lap-H-l} can be obtained by directly linearizing $\Delta H_j$. In other words, we have obtained a linearization of the nonlinear problem
\begin{equation}
\label{eq:nonlinemod}
   \begin{array}{rcl}
\left(\Lap+\frac{1}{q}[\grad\dvg,q]+q\right)E_j&=&0 \\
  \Delta ( \sigma |E_j|^2 ) &=& \Delta H_j.
\end{array}
\end{equation}
With $v=(\{E_j, E_j^*\}_{j=1}^J, \sigma,n)$, still using $(E_j,E_j^*)$ instead of the (real-valued) real and imaginary parts, we recast the above nonlinear problem as
\begin{equation}
\label{eq:nonlintF}
\tilde \mF (v) = \mH
\end{equation}

The leading term of \eqref{eqn:Lap-H-l} can be expanded as
\begin{equation}\label{eqn:Lap-H-l-p}
\left(|E_j|^2\Lap-\frac{2\omega^2\sigma_0^2}{|q_0|^2}E_j\otimes E_j^*:\grad^{\otimes 2}\right)\des-\frac{2\omega^4\sigma_0n_0}{|q_0|^2}E_j\otimes E_j^*:\grad^{\otimes 2}\den
\end{equation}
where $a\otimes b:=ab^T$ for vectors and $A:B$ denotes $\displaystyle\sum_{i,j}A_{ij}B_{ij}$ for matrices.  

The above expression is the main ingredient that we use to prove that the complete system for $w:=(\{\deE_j, \deE_j^*\}_{j=1}^J, \des,\den)$ is elliptic for an open set of boundary conditions $\{f_j\}$ used to construct the solutions $\{E_j\}$. Because the system is second-order for $w$, it requires boundary conditions, chosen here as Dirichlet conditions $w^\delta := (\{{\deE_j}_{|\partial\Omega}, {\deE_j^*}_{|\partial\Omega}\}_{j=1}^J,\des_{|\partial\Omega},\den_{|\partial\Omega})$ known on $\partial \Omega$, that are more constraining than the conditions prescribed on $\{E_j\}$. These boundary conditions need to be measured in practice, and we expect the errors made on such measurements to be small if the $v=(\{E_j, E_j^*\}_{j=1}^J, \sigma,n)$  about which we linearize are close to the true value of these parameters. We collect the equations \eqref{eqn:ME-l-e}, \eqref{eqn:ME-l-e-conj} and \eqref{eqn:Lap-H-l} into the linear system 
\begin{equation}
\label{eq:linmod}
 \mA w = \mS\quad \mbox{ in } \quad \Omega,
\end{equation}

With this, we obtain the following elliptic regularity result.
\begin{theorem}\label{thm:main-1}
Let $\Omega$ be a bounded open subset of $\R^3$ with smooth boundary.
Given $J=4$, there exists a boundary illumination set $\{f_j\}_{j=1}^4\subset \textrm{TH}^{1/2}(\partial\Omega)$ such that the redundant linear system \eqref{eq:linmod} for $w:=(\{\deE_j, \deE_j^*\}_{j=1}^J, \des,\den)$ augmented with the Dirichlet boundary condition 
\begin{equation}\label{eqn:BC-D}
w_{|\partial\Omega}= w^\delta,
\end{equation}
is an elliptic boundary value problem.  
Moreover, we have the following estimate
\begin{equation}\label{eqn:stability}
\begin{split}
\|w\|_{H^s(\Omega;\C^{6J+2})} \leq C_1\Big( \|\delta H\|_{H^s(\Omega;\R^{J})} + \|w^\delta\|_{H^{s-\frac12}(\partial\Omega;\C^{6J+2})}\Big) + C_2 \|w\|_{L^2(\Omega;\C^{6J+2})},
\end{split}
\end{equation}
for all $s>2+\frac32=\frac72$ provided that $(\sigma,n,\{E_j\})$ are sufficiently smooth. 
\end{theorem}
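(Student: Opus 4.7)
The plan is to cast the statement as an application of the Agmon--Douglis--Nirenberg/Solonnikov regularity theory for (possibly redundant) elliptic boundary value problems. Once the pair $(\mA,\mathrm{Id}_{|\partial\Omega})$ is shown to be elliptic in the Douglis--Nirenberg sense, the estimate \eqref{eqn:stability} follows directly from the Schauder-type regularity theorem in \cite{So}; the threshold $s>7/2$ will come out of the Sobolev regularity needed to control the lower-order coefficients carried by $(\sigma,n,\{E_j\})$. The proof therefore reduces to (i) ellipticity of $\mA$ in the Douglis--Nirenberg sense and (ii) the Lopatinskii covering condition for the Dirichlet trace on $\partial\Omega$, with the selection of the illumination tuple $\{f_j\}_{j=1}^4$ supplied by the CGO analysis of section \ref{sec:CGO}.

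For (i), I assign Douglis--Nirenberg weights $t_j=2$ to every column (the system is second order throughout) and $s_i=0$ to every row, so that principal entries have order exactly $s_i+t_j=2$. A direct inspection of \eqref{eqn:ME-l-e}, \eqref{eqn:ME-l-e-conj} and \eqref{eqn:Lap-H-l} shows that the $7J\times(6J+2)$ principal symbol matrix takes the block form
\[
\mA_0(x,\xi)=\begin{pmatrix} -|\xi|^2\,I_{6J} & B(x,\xi) \\ 0 & C(x,\xi) \end{pmatrix},
\]
since the leading parts of \eqref{eqn:ME-l-e} and \eqref{eqn:ME-l-e-conj} on $\deE_j,\deE_j^*$ are Laplacians, while the last $J$ rows keep only entries of order $\le 1$ in the $\deE_j,\deE_j^*$ columns --- the second derivatives of $\deE_j,\deE_j^*$ that originally appear in $\Lap\deH_j$ having been eliminated via the Maxwell-type equations in the derivation of \eqref{eqn:Lap-H-l}. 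The diagonal block $-|\xi|^2 I_{6J}$ is invertible for $\xi\neq 0$, so $\mA_0(x,\xi)$ has full column rank $6J+2$ if and only if the $J\times 2$ block $C(x,\xi)$ has rank $2$ for every $x\in\overline\Omega$ and every $\xi\in\R^{\ndim}\setminus\{0\}$.

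Reading $C(x,\xi)$ off from \eqref{eqn:Lap-H-l-p}, the $j$-th row is
\[
\Bigl(-|E_j|^2|\xi|^2+\tfrac{2\omega^2\sigma_0^2}{|q_0|^2}|E_j\cdot\xi|^2,\ \ \tfrac{2\omega^4\sigma_0 n_0}{|q_0|^2}|E_j\cdot\xi|^2\Bigr),
\]
so that the $2\times 2$ minor indexed by $(j,k)$ has determinant proportional to $|\xi|^2\bigl(|E_j|^2\,|E_k\cdot\xi|^2-|E_k|^2\,|E_j\cdot\xi|^2\bigr)$. Ellipticity of $\mA$ thus reduces to the pointwise geometric requirement that, for every $x\in\overline\Omega$ and every unit $\xi$, at least one such minor is nonzero. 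This is where the CGO input and the choice $J=4$ enter: by combining four complex geometric optics solutions with suitably spread polarizations, one arranges that the map $\xi\mapsto\{|E_j(x)\cdot\xi|^2/|E_j(x)|^2\}_{j=1}^4$ separates the unit sphere in the sense above, uniformly in $x\in\overline\Omega$, and that this property is preserved under small perturbations of $(\sigma,n)$. Producing such a quadruple $\{f_j\}$ is the main obstacle of the argument and is exactly what the analysis of section \ref{sec:CGO} is designed to provide.

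For (ii), the block-triangular structure of $\mA_0$ makes the one-dimensional problem \eqref{eqn:BVP-1d} tractable by direct substitution: inserting $\xi=\zeta+i\nu(y)\,d/dz$ in $\mA_0$ turns the top $6J\times 6J$ block into the scalar operator $(d^2/dz^2-|\zeta|^2)I_{6J}$ and the bottom $J\times 2$ block into a rank-$2$ second-order ODE system in $(\tilde{\des},\tilde{\den})$, thanks to (i). Ellipticity of the latter forces all its characteristic roots off the real axis, so the space of solutions decaying at $+\infty$ has dimension exactly $2$, and Dirichlet at $z=0$ forces $(\tilde{\des},\tilde{\den})\equiv 0$; back-substitution into the top block then gives $\tilde\deE_j\equiv\tilde\deE_j^*\equiv 0$. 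Hence the Dirichlet operator covers $\mA$ in the sense of \cite{Lo,So}. Combined with the ellipticity from (i), this shows that \eqref{eq:linmod}--\eqref{eqn:BC-D} is an elliptic boundary value problem, and the Solonnikov Schauder estimates in \cite{ADN-ii,So} yield \eqref{eqn:stability}.
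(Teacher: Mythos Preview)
Your overall architecture is exactly the paper's: reduce Theorem~\ref{thm:main-1} to (i) Douglis--Nirenberg ellipticity of $\mA$ and (ii) the Lopatinskii covering condition for Dirichlet data, then invoke Solonnikov's estimate in \cite{So}. Part (i) is correctly argued and coincides with Lemma~\ref{lem:ellip-op}.

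The gap is in part (ii). Your one-sentence verification (``ellipticity forces all characteristic roots off the real axis, so the decaying space has dimension exactly $2$, and Dirichlet kills it'') does not go through as stated. Two issues. First, the bottom block $\mA_{22}$ is a genuinely overdetermined $J\times 2$ ODE, so to get a characteristic polynomial one must pass to a $2\times 2$ subsystem (the paper picks rows $j=1,2$ with $\wt\mA_{22}(y,\nu)$ invertible). But interior ellipticity of the full $\mA_{22}$---rank $2$ for every real $\xi$---does \emph{not} force any fixed $2\times2$ minor to be elliptic; in the paper's explicit computation one finds, besides $\lambda_{1,2}=\pm|\zeta|$, the pair
\[
\lambda_{3,4}=\frac{i\mathfrak b\pm\sqrt{4\mathfrak a\mathfrak c-\mathfrak b^2}}{2\mathfrak a},
\]
which are purely imaginary when $4\mathfrak a\mathfrak c<\mathfrak b^2$. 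In that regime the decaying subspace is one-dimensional, not two, so your dimension count is wrong. Second, in the complementary case where two decaying modes $e^{\lambda_2 z}\vec v_2$ and $e^{\lambda_4 z}\vec v_4$ do exist, Dirichlet at $z=0$ annihilates them only if $\vec v_2$ and $\vec v_4$ are linearly independent---and that linear independence is precisely the Lopatinskii condition you are trying to establish. For systems it is \emph{not} a formal consequence of interior ellipticity, and the paper checks it by hand (Lemma~\ref{lem:ellip-bc}). So step (ii) really requires the concrete eigenvalue/eigenvector analysis; the abstract sentence you wrote does not substitute for it.
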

The above result states that four well-chosen boundary illuminations would provide ellipticity of the linearized system of equations. Its proof is based on the elliptic regularity Theorem 1.1 of \cite{So} derived for redundant elliptic systems. We need to ensure that the assumptions of ellipticity are satisfied, and this will be the main objective of the next two sections.


\begin{remark}
Before presenting injectivity and stability results for the linear and nonlinear problems, let us pause on the relation between the vectorial problem based on the Maxwell's system of equations and the scalar problem based on the Helmholtz equation. Let us assume that $n$ is known so that $\delta n=0$ in the above equations. Some analysis that easily follows from calculations in \cite{BRUZ-IP-11} shows that the equivalent statement to \eqref{eqn:Lap-H-l-p} for the scalar case is
\begin{displaymath}
  |u|^2 \delta\sigma = \delta H + \mbox{ lower order terms}.
\end{displaymath}
Since $|u|^2>0$, the above equation for $\delta\sigma$ is clearly elliptic (without any additional boundary condition on $\partial \Omega$). This is consistent with (though clearly not equivalent to) the results obtained in \cite{BRUZ-IP-11}. In contrast \eqref{eqn:Lap-H-l-p}  for the Maxwell case with $\den=0$ is given by
\begin{displaymath}
  \left(|E_j|^2\Lap-\frac{2\omega^2\sigma_0^2}{|q_0|^2}E_j\otimes E_j^*:\grad^{\otimes 2}\right)\des = \Delta H_j +  \mbox{ lower order terms}.
\end{displaymath}
The symbol of the above principal  term is given by
\begin{displaymath}
  |\xi|^2 - \tau \hat E_j\otimes \hat E_j : \xi\otimes \xi ,\qquad \tau = \dfrac{2\omega^2\sigma_0^2}{\omega^2\sigma_0^2+\omega^4 n_0^2},\quad \hat E_j=\dfrac{E_j}{|E_j|}.
\end{displaymath}
When $\tau<1$, then the above operator is clearly elliptic. However, for $\tau>1$, the operator becomes hyperbolic with respect to the direction $\hat E_j$ so that the above operator becomes a wave-type operator and elliptic regularity no longer holds. We thus observe a qualitative difference between the scalar Helmholtz and vectorial Maxwell problems from the reconstruction of $\sigma$ from knowledge of the absorption map $H(x)$.
\end{remark}

\medskip


Let us come back to the nonlinear hybrid inverse problem and the above theorem. The presence of the constant $C_2\not=0$ indicates that the linearized problem may not be injective. Following the methodology presented in \cite{B-Irvine-13}, we introduce the linearized normal operator defined as
\begin{equation}
\label{eq:normalop}
 \mA^t \mA w = \mA^t \mS\quad\mbox{ in } \quad\Omega,\qquad w_{|\partial \Omega} = w^\delta\ \mbox{ and } \ \partial_\nu w_{|\partial\Omega} = j^\delta.
\end{equation}
Since $\mA^t\mA$ is a fourth-order operator, we need two boundary conditions, which we consider of Dirichlet type. It is shown in \cite{B-Irvine-13} that the above linear operator is injective (i) when the coefficients $v_0=(\{E_j, E_j^*\}_{j=1}^J, \sigma,n)$ are in a sufficiently small vicinity of an analytic coefficient (with the vicinity depending on that analytic coefficients); and (ii) when the domain $\Omega$ is sufficiently small. 

The stability estimate presented in the above theorem then extends to a nonlinear hybrid inverse problem as follows. Let $v_0$ be given and $\mA$ be the linear operator defined above with coefficients described by $v_0$. Let us consider the nonlinear inverse problem
\begin{equation}
\label{eq:nonlinfinalmodif}
  \mF(v) := \mA^t \tilde\mF(v)   = \mA^t \mH\quad\mbox{ in }\quad\Omega,\qquad v_{|\partial \Omega} = v^\delta\ \mbox{ and } \ \partial_\nu v_{|\partial\Omega} = g^\delta.
\end{equation}
Then defining $v=v_0+w$ and linearizing the above inverse problem about $v_0$, we observe that the linear equation for $w$ is precisely of the form \eqref{eq:normalop}. This and the theory presented in \cite{B-Irvine-13} allow us to obtain the following result.
\begin{theorem}
\label{thm:stabnonlin}
  Let $v_0$ be defined as above and let us assume that the linear operator defined in \eqref{eq:normalop} is injective. Let $v$ and $\tilde v$ be solutions of \eqref{eq:nonlinfinalmodif} with respective source terms $\mH$ and $\tilde\mH$ and respective boundary conditions $v^\delta$ and $\tilde v^\delta$ as well as $g^\delta$ and $\tilde g^\delta$. Then $(\mH,v^\delta,g^\delta)=(\tilde\mH,\tilde v^\delta,\tilde g^\delta)$ implies that $v=\tilde v$, in other words the nonlinear hybrid inverse problem is injective. Moreover, we have the stability estimate
  \begin{equation}
\label{eq:stabnon}
  \|v-\tilde v\|_{H^s(\Omega;\C^{6J+2})} \leq C \Big( \|H-\tilde H\|_{H^s(\Omega;\R^J)} + \|v^\delta-\tilde v^\delta\|_{H^{s-\frac12}(\partial\Omega;\C^{6J+2})} +  \|g^\delta-\tilde g^\delta\|_{H^{s-\frac32}(\partial\Omega;\C^{6J+2})} \Big).
\end{equation}
This estimates hold for $C=C_s$ when $s>\frac72$.
\end{theorem}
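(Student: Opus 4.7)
The plan is to derive the stability estimate by combining the ellipticity of the linearization (Theorem \ref{thm:main-1}) with the injectivity hypothesis on the normal operator, and then to transfer this to the nonlinear problem via a standard perturbation argument, following the template of \cite{B-Irvine-13}.

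First, I would analyze the linearized normal problem \eqref{eq:normalop}. Because $\mA$ is an elliptic redundant system of second-order operators (Theorem \ref{thm:main-1}), the square operator $\mA^t\mA$ is a fourth-order Douglis--Nirenberg elliptic system, and the Dirichlet pair $(w^\delta,j^\delta)$ is complementing for it. Standard Schauder-type regularity for fourth-order elliptic boundary value problems then yields
\[
  \|w\|_{H^s(\Omega;\C^{6J+2})} \leq C\Big( \|\mA^t\mA w\|_{H^{s-4}(\Omega)} + \|w^\delta\|_{H^{s-\frac12}(\partial\Omega)} + \|j^\delta\|_{H^{s-\frac32}(\partial\Omega)} \Big) + C\|w\|_{L^2(\Omega)}.
\]
The injectivity hypothesis on $\mA^t\mA$, together with the Fredholm alternative (the embedding $H^s\hookrightarrow L^2$ is compact), allows one to absorb the term $C\|w\|_{L^2}$ and obtain the sharp estimate without any lower order correction. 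This is the key intermediate linear stability statement.

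Next, I linearize the nonlinear residual. Writing $v=v_0+w$ and $\tilde v = v_0+\tilde w$, Taylor expansion gives
\[
\tilde\mF(v)-\tilde\mF(\tilde v) \;=\; \mA(w-\tilde w) + R(w,\tilde w),
\]
where $R$ collects quadratic and higher remainders. The only genuine nonlinearity in $\tilde\mF$ enters through the products $\sigma|E_j|^2$ and through the coefficient $1/q$, both smooth in the relevant arguments. For $s>7/2$ in $\R^3$, Sobolev's multiplication theorem makes $H^s(\Omega)$ an algebra with enough room for two derivative losses, so
\[
\|R(w,\tilde w)\|_{H^{s-2}(\Omega)} \;\leq\; C\bigl(\|w\|_{H^s}+\|\tilde w\|_{H^s}\bigr)\,\|w-\tilde w\|_{H^s}.
\]
Subtracting the nonlinear equations \eqref{eq:nonlinfinalmodif} for $v$ and $\tilde v$ and applying $\mA^t$ yields
\[
\mA^t\mA(w-\tilde w) \;=\; \mA^t(\mH-\tilde\mH)\;-\;\mA^t R(w,\tilde w),
\]
with boundary data $(v^\delta-\tilde v^\delta,\,g^\delta-\tilde g^\delta)$.

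Finally, I apply the sharpened linear estimate from the first step to $w-\tilde w$ and substitute the remainder bound. Choosing $v$ and $\tilde v$ within a sufficiently small $H^s$-neighborhood of $v_0$ makes the prefactor $\|w\|_{H^s}+\|\tilde w\|_{H^s}$ small enough to absorb the nonlinear contribution into the left-hand side, which produces the stability bound \eqref{eq:stabnon}. Specializing to equal data gives injectivity, $v=\tilde v$. The main obstacle is not the ellipticity, already handled by Theorem \ref{thm:main-1}, nor the Fredholm absorption of the $L^2$ term (routine under the injectivity hypothesis), but rather controlling the remainder $R$ uniformly in a high enough Sobolev norm; this is precisely what forces the threshold $s>7/2$ and the smallness assumption implicit in working in a vicinity of $v_0$ as discussed after \eqref{eq:normalop}.
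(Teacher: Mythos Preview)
Your proposal is correct and follows essentially the same approach as the paper, which simply states that the result is a direct consequence of Theorem~\ref{thm:main-1} together with the general theory of \cite{B-Irvine-13}; you have in effect unpacked that reference into its constituent steps (ellipticity of the normal operator, absorption of the $L^2$ term via injectivity and Fredholm theory, Taylor expansion of the nonlinearity, and contraction in a small $H^s$-ball around $v_0$).
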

The proof of this theorem is a direct consequence of Theorem \ref{thm:main-1} and the theory presented in \cite{B-Irvine-13}. It thus remains to prove the latter theorem, which is the object of the following two sections.

\section{Ellipticity}\label{sec:ellipticity}
Consider the system of equations \eqref{eqn:ME-l-e}, \eqref{eqn:ME-l-e-conj} and \eqref{eqn:Lap-H-l} for $1\leq j\leq J$ written in the form $\mA(x,D) v=b$ where 
\begin{equation}\label{eqn:X-b}\begin{split}
v&=\left(\deE_1, \deE^*_1, \ldots, \deE_J, \deE^*_J, \des, \den\right)^T,\\
b&=\left(\vec0,\ldots,\vec 0,\Lap\deH_1,\ldots,\Lap\deH_J\right)^T
\end{split}\end{equation}
and $\mA(x,D)$ is a second order matrix differential operator with the principal part given by
\begin{equation}\label{eqn:A0}\mA_0(x,D)=\left(\begin{array}{c|c}\Lap I_{2J\ndim} & \mA_{12}(x,D)  \\ \hline 0 & \begin{array}{cc}a_1(x,D) & b_1(x,D) \\ \vdots & \vdots \\a_J(x,D) & b_J(x,D)\end{array} \end{array}\right)\end{equation}  
where $I_\ndim$ is the $\ndim\times \ndim$ identity matrix. Here $a_j(x,D)$ and $b_j(x,D)$ are second order operators from \eqref{eqn:Lap-H-l-p} whose symbols are
\[a_j(x,\xi)= -|E_j|^2|\xi|^2+2\kappa|E_j\cdot\xi|^2,\qquad b_j(x,\xi)=2\tau|E_j\cdot\xi|^2\]
with $\kappa:=\frac{\omega^2\sigma_0^2}{|q_0|^2}$ and $\tau:=\frac{\omega^4\sigma_0n_0}{|q_0|^2}$.
For $\mA_0(x,\xi)$ to have rank $2J\ndim+2$ ($J\geq2$ so that the system is not underdetermined) for $x\in\Omega$ and $\xi\in\R^3\setminus\{0\}$, one has to show that the rank of 
\[\mA_{22}(x,\xi):=\left(\begin{array}{cc}a_1(x,\xi) & b_1(x,\xi) \\ \vdots & \vdots \\a_J(x,\xi) & b_J(x,\xi)\end{array}\right)\] is 2. This is equivalent to show that
\[(a_jb_l-a_lb_j)(x,\xi)=0\quad 1\leq j<l\leq J\quad\Rightarrow\quad\xi=0\]
for every $x\in\Omega$, that is,
\begin{equation}\label{eqn:ellip-cond-1}
|E_j|^2|E_l|^2\left(|\wh E_j\cdot\xi|^2-|\wh E_l\cdot\xi|^2\right)=0\quad 1\leq j<l\leq J\quad \Rightarrow\quad \xi=0
\end{equation}
where $\wh E_j:=E_j/|E_j|$.
Then the question is whether we can find enough background solutions, namely, solutions to
\begin{equation}\label{eqn:bkgd}
-\curl\curl E_j + q_0E_j=0,
\end{equation}
such that \eqref{eqn:ellip-cond-1} is satisfied. \\

If $q_0$ is a nonzero constant, \eqref{eqn:bkgd} degenerates to 
\[(\Lap+q_0)E_j=0,\quad \grad\dvg E_j=0\]
which admits plane wave solutions of the form $E_j=\eta_je^{i\zeta_j\cdot x}$ provided that
\begin{equation}-\zeta_j\cdot\zeta_j+q_0=\zeta_j\cdot \eta_j=0, \quad \zeta_j,\; \eta_j\in\C^\ndim\setminus\{0\}.\end{equation}
This allows us to choose $\eta_j$ to be any real vector, and $\textrm{Re}(\zeta_j)$ and $\textrm{Im}(\zeta_j)$ satisfying
\[\textrm{Re}(\zeta_j)\perp \eta_j, \quad \textrm{Im}(\zeta_j)\perp \eta_j,\]
\[\textrm{Re}(\zeta_j)^2-\textrm{Im}(\zeta_j)^2=\textrm{Re}(q_0), \quad 2\textrm{Re}(\zeta_j)\textrm{Im}(\zeta_j)=\textrm{Im}(q_0).\]
Such condition can be fulfilled in three or higher dimensional space.

Note that $|E_j|\neq 0$ everywhere and $|\wh E_j\cdot\xi|=|\wh \eta_j\cdot\xi|$ for $1\leq j\leq J$. In $\R^\ndim$, it requires $J\geq \ndim+1$ and $\textrm{span}\{\wh \eta_1,\ldots, \wh \eta_J\}=\R^\ndim$ to have 
\[|\wh \eta_j\cdot\xi|=|\wh \eta_l\cdot\xi|\quad 1\leq j< l\leq J \quad\Rightarrow\quad \xi=0,\]
which is condition \eqref{eqn:ellip-cond-1}. In particular, in $\R^3$, we have $J\geq 4$. 

Now we consider the non-constant $q_0$ case. The background wave we are using to fulfill the elliptic condition is the CGO solution, originally constructed for the full Maxwell's equations in \cite{OPS,OS}. We present in the following lemma the CGO electric field corresponding to our system, with extra point-wise estimates at our disposal. The proof of the estimates is detailed in Section \ref{sec:CGO}. Here we denote by $B_r$ a ball in $\R^\ndim$ of radius $r>0$. 
\begin{lemma}\label{lem:CGO} Let $\Omega$ be a bounded domain in $\R^\ndim$ with smooth boundary. 
Let $q_0=\omega^2n_0+i\omega\sigma_0\in {H^{l'+2}(\R^\ndim)}$ ($l'>\ndim/2$)  
and $q_0(x)\neq 0$ everywhere. Suppose that  $n_0(x)-n_c\geq0$ and $\sigma_0(x)\geq0$ are compactly supported on some ball $B\supset\supset\overline{\Omega}$ for some constant $n_c>0$. 
Denote $k=\omega \sqrt{n_c}$. For $\rho, \rho^\bot\in\mathbb S^{\ndim-1}$ with $\rho\perp\rho^\bot$ and $s>0$, define
\begin{equation}\label{eqn:zeta}
\zeta:=-is\rho+\sqrt{s^2+k^2}\rho^\bot
\end{equation}
and 
\begin{equation}\label{eqn:eta-zeta}
\eta_\zeta:=\frac{1}{|\zeta|}\left(-(\zeta\cdot \vec a)\zeta-k\zeta\times \vec b+k^2 \vec a\right)
\end{equation}
for any $\vec a, \vec b\in\C^\ndim$. Then there exists an $s_0>0$ such that for $s>s_0$, the equation \eqref{eqn:bkgd} admits a unique solution in $H^1_{\textrm{loc}}(\R^\ndim;\C^{\ndim})$ of the form
\begin{equation}\label{eqn:E-CGO}
E_\zeta(x)=\gamma_0^{-1/2}e^{ix\cdot\zeta}(\eta_\zeta+R_\zeta(x))
\end{equation}
where $\gamma_0=q_0/k^2$ and $\gamma_0^{1/2}$ denotes the principal branch. Moreover, $R_\zeta\in L^\infty(\Omega;\C^\ndim)$ satisfies
\begin{equation}\label{eqn:CGO-R}\|R_\zeta\|_{L^\infty(\Omega;\C^\ndim)}\leq C \end{equation}
where $C>0$ is independent of $s$. 
\end{lemma}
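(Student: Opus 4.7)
The plan is to follow the CGO construction for Maxwell-type operators of \cite{OPS,OS}, adapted to the second-order elliptic reformulation \eqref{eqn:ME-2} and strengthened with a uniform-in-$s$ pointwise control on the remainder. First I would apply the substitution $E = \gamma_0^{-1/2} F$; since $q_0$ is bounded away from zero and $\gamma_0^{1/2}$ is taken as the principal branch, this is a smooth invertible change of unknowns that conjugates \eqref{eqn:ME-2} into a Schr\"odinger-type system of the form $(\Lap + k^2 - V) F = \mathcal{L} F$, where $V$ is a matrix-valued potential depending smoothly on $q_0, \grad q_0, \Hes q_0$ and $\mathcal{L}$ collects the residual first-order coupling. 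Because $n_0 - n_c$ and $\sigma_0$ are compactly supported in $B$, both $V$ and the coefficients of $\mathcal{L}$ vanish outside $B$.

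Next I would insert the ansatz $F = e^{ix\cdot\zeta}(\eta_\zeta + R_\zeta)$. The choice of $\zeta$ in \eqref{eqn:zeta} gives the algebraic identity $\zeta\cdot\zeta = k^2$, and a direct computation from \eqref{eqn:eta-zeta} yields $\zeta\cdot\eta_\zeta = 0$ and $|\eta_\zeta| \lesssim |\zeta|$ uniformly in $\vec a,\vec b$. Together these cancel the leading $O(|\zeta|^2)$ contribution when the conjugated Schr\"odinger-type operator is applied to $e^{ix\cdot\zeta}\eta_\zeta$, leaving an equation for $R_\zeta$ of the form
\begin{equation*}
(\Lap + 2i\zeta\cdot\grad) R_\zeta \;=\; V(\eta_\zeta + R_\zeta) + \mathcal{L}(\eta_\zeta + R_\zeta) + \mathcal{R}_\zeta,
\end{equation*}
where $\mathcal{R}_\zeta$ absorbs the $O(|\zeta|)$ algebraic remainders. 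After the normalization by $|\zeta|$ built into the definition \eqref{eqn:E-CGO}, the right-hand side is uniformly controlled in $s$ in weighted $L^2$ norms.

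The solvability of this equation rests on the Sylvester--Uhlmann resolvent estimate
\begin{equation*}
\|u\|_{L^2_{-\delta}(\R^\ndim)} \;\leq\; \frac{C}{|\zeta|}\,\bigl\|(\Lap + 2i\zeta\cdot\grad) u\bigr\|_{L^2_{\delta}(\R^\ndim)}, \qquad \delta > \tfrac{1}{2},
\end{equation*}
valid uniformly for $\zeta\cdot\zeta = k^2$ and $|\zeta|$ large. A Banach fixed-point argument in a weighted $L^2$ ball, using the $|\zeta|^{-1}$ gain to beat the bounded operator norms of $V$ and $\mathcal{L}$, produces a unique $R_\zeta$ with $\|R_\zeta\|_{L^2_{-\delta}} = O(|\zeta|^{-1})$ once $s > s_0$. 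To upgrade this to the stated $L^\infty$ bound \eqref{eqn:CGO-R}, I would differentiate the equation up to $l'$ times, reapply the same resolvent estimate at each step using the hypothesis $q_0 \in H^{l'+2}$ to keep derivatives of $V$ and of the coefficients of $\mathcal{L}$ bounded, and conclude via the Sobolev embedding $H^{l'}_{\mathrm{loc}} \hookrightarrow L^\infty(\Omega)$ for $l' > \ndim/2$. The main technical obstacle is the uniform-in-$s$ bookkeeping in this bootstrap: each derivative falling on $e^{-ix\cdot\zeta}\Lap e^{ix\cdot\zeta}$ generates powers of $|\zeta|$ that must be absorbed by the $|\zeta|^{-1}$ resolvent gain at every iteration in order to arrive at an $O(1)$ pointwise bound on $R_\zeta$ independent of $s$.
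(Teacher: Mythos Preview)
Your outline follows the right architecture---conjugation by $\gamma_0^{-1/2}$, the CGO ansatz, the Sylvester--Uhlmann resolvent bound, and Sobolev embedding at the end---but there is a genuine gap at the fixed-point step. The ``residual first-order coupling $\mathcal{L}$'' that you correctly identify is exactly the obstruction: after conjugation one is left (in the paper's notation) with a term of the form $\gamma_0^{1/2}\alpha\vee\wt d(\wh\eta+\wh R)$ on the right-hand side, and $\wt d = d + i\zeta\wedge$ is a first-order operator carrying a factor of $|\zeta|$. Applying $G_\zeta$ to such a term yields an operator of norm $O(1)$, not $O(|\zeta|^{-1})$, so the contraction mapping you describe does not close. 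The same issue recurs in your proposed bootstrap: each differentiation regenerates the first-order coupling, and the $|\zeta|^{-1}$ gain from the resolvent is exactly cancelled, leaving no smallness to iterate on.

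The paper circumvents this by the Colton--P\"aiv\"arinta device: one introduces the auxiliary $2$-form $Q:=\wt d(\wh\eta+\wh R)$ and derives a second equation for $Q$ from the divergence constraint. The resulting coupled system for $(R,Q)$, equation~\eqref{eqn:RQ}, has the crucial feature that \emph{all} right-hand side terms are zeroth-order multiplications by compactly supported $H^{l'}$ coefficients---the problematic $\wt d\wh R$ has been absorbed into the new unknown $Q$. A Neumann series in the weighted space $H^{l'}_\theta$ (using the upgraded Faddeev estimate $\|G_\zeta\|_{H^{l'}_{\theta+1}\to H^{l'}_\theta}\leq C|\zeta|^{-1}$ of \eqref{eqn:FK-Hdelta}, which commutes the regularity gain past $G_\zeta$ without any bootstrap) then gives $\|R\|_{H^{l'}_\theta}+\|Q\|_{H^{l'}_\theta}\leq C|\zeta|^{-1}|\eta_\zeta|$. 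Since $|\eta_\zeta|=O(|\zeta|)$, this yields $\|R_\zeta\|_{H^{l'}_\theta}\leq C$ uniformly in $s$, and Sobolev embedding on $\Omega$ finishes. Your proposal is missing this enlargement of the unknown, which is the key idea that makes the iteration contractive.
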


Let us denote 
\[\wh \zeta_\infty=\lim_{s\rightarrow\infty}\frac{\zeta}{|\zeta|}=\frac{1}{\sqrt 2}(-i\rho+\rho^\perp)\]
and take $\vec a$ such that $\wh \zeta_\infty\cdot \vec a =1$, for example, $\vec a=\wh\zeta_\infty^*$. We can choose $\vec b\in\mathbb S^2$ relatively freely.  It is not hard to see that as $s\rightarrow\infty$,
\[|\eta_\zeta +\zeta|=o(s).\]
Note that $|\zeta|=\sqrt{2s^2+k^2}\sim\sqrt 2s$ as $s\rightarrow\infty$.
Then \eqref{eqn:CGO-R} implies that for every $x\in\Omega$,
\[|E_\zeta(x)|\sim |\gamma_0(x)|^{-1/2}e^{s x\cdot\rho}\sqrt 2 s\quad\mbox{ as }\;s\rightarrow\infty.\]
This shows that $|E_\zeta(x)|\neq 0$ everywhere in $\Omega$. Moreover, 
\[|\wh E_\zeta(x)\cdot\xi|=\frac{\left|\left(\eta_\zeta+R_\zeta(x)\right)\cdot\xi\right|}{|\eta_\zeta+R_\zeta(x)|}\sim |\wh\zeta_\infty\cdot\xi|=\frac{1}{\sqrt 2}\sqrt{|\xi\cdot\rho|^2+|\xi\cdot\rho^\perp|^2} \]
independent of $x\in\Omega$ for $s$ large enough.

Now we choose $\ndim$ pairs of $(\rho_j, \rho_j^\perp)\in \mathbb S^{\ndim-1}\times\mathbb S^{\ndim-1}$ ($j=1,\ldots,\ndim$) such that $\rho_j=\wh e_\ndim$ and $\rho_j^\perp$ are $\ndim$ distinct directions in $\textrm{span}\{\wh e_1,\ldots,\wh e_{\ndim-1}\}$. Denote by $E_j$ the CGO solution corresponding to $(\rho_j,\rho_j^\perp)$ and some $s>0$. Then when $\xi\neq 0$ is not orthogonal to the $(\wh e_1,\ldots, \wh e_{\ndim-1})$-plane, at least two of $\{|\xi\cdot\rho_j^\perp|\}_{j=1,\ldots,\ndim}$ are not equal. This implies that for $\tau$ large enough, at least two of $\{|\wh E_j\cdot\xi|\}_{j=1,\ldots,\ndim}$ are not equal. 

At last, we choose the $(\ndim+1)$-th pair of directions $(\rho_{\ndim+1}, \rho_{\ndim+1}^\perp)=(\wh e_1, \frac{1}{\sqrt 2}\wh e_2+\frac{1}{\sqrt 2}\wh e_\ndim)$. Then in the case when $\xi\perp \textrm{span}\{\wh e_1,\ldots,\wh e_{\ndim-1}\}$, we have 
\[\sqrt{|\xi\cdot\rho_j|^2+|\xi\cdot\rho_j^\perp|^2}=|\xi\cdot \wh e_\ndim|=|\xi|\neq \frac{1}{\sqrt 2}|\xi|=\sqrt{|\xi\cdot\rho_{\ndim+1}|^2+|\xi\cdot\rho_{\ndim+1}^\perp|^2}\]
for $j=1,\ldots,\ndim$.
Therefore, for $\tau$ large enough, $|\wh E_{\ndim+1}\cdot\xi|\neq |\wh E_{j}\cdot\xi|$ ($j=1,\ldots,\ndim$), where $E_{\ndim+1}$ is the CGO solution for $(\rho_{\ndim+1}, \rho_{\ndim+1}^\perp)$.
Hence the condition \eqref{eqn:ellip-cond-1} is verified and we have 
\begin{lemma}\label{lem:ellip-op}
Given $(\ndim+1)$ CGO solutions $E_j$ ($0\leq j\leq \ndim+1$) as above, for $s>0$ large enough, we have that the operator $\mA(x,D)$ is elliptic for $x\in\Omega$. 
\end{lemma}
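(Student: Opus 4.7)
The plan is to reduce Douglis--Nirenberg ellipticity of $\mA(x,D)$ to the algebraic rank condition \eqref{eqn:ellip-cond-1} on the principal symbol, and then verify that the $\ndim+1$ CGO solutions supplied by Lemma \ref{lem:CGO} realize this condition provided $s$ is large enough. From the block upper-triangular form of $\mA_0(x,\xi)$ in \eqref{eqn:A0}, the top-left block $|\xi|^2 I_{2J\ndim}$ has full rank $2J\ndim$ for every $\xi\neq 0$, so $\textrm{rank}\,\mA_0(x,\xi) = 2J\ndim + \textrm{rank}\,\mA_{22}(x,\xi)$. Ellipticity is therefore equivalent to $\mA_{22}(x,\xi)$ having rank $2$ for every $\xi\in\R^\ndim\setminus\{0\}$, which by inspection of the $2\times 2$ minors of $\mA_{22}$ is exactly the content of \eqref{eqn:ellip-cond-1}.

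Next, I would extract a sharp asymptotic for $\wh E_j(x)\cdot\xi$ from Lemma \ref{lem:CGO}. Choosing $\vec a=\wh\zeta_\infty^*$ (so that $\wh\zeta_\infty\cdot\vec a=1$), a direct expansion of \eqref{eqn:eta-zeta} using $|\zeta|\sim\sqrt{2}\,s$ and $\zeta/|\zeta|=\wh\zeta_\infty+O(s^{-2})$ yields $\eta_\zeta/|\zeta|=-\wh\zeta_\infty+O(s^{-1})$. Combined with the uniform bound \eqref{eqn:CGO-R} on $R_\zeta$, this gives
\[
\wh E_\zeta(x) = -\wh\zeta_\infty + O(s^{-1}),
\qquad\mbox{hence}\qquad
|\wh E_\zeta(x)\cdot\xi|^2 = \frac{1}{2}\bigl(|\xi\cdot\rho|^2+|\xi\cdot\rho^\perp|^2\bigr) + O(s^{-1}),
\]
uniformly for $x\in\overline\Omega$ and $\xi\in\mathbb S^{\ndim-1}$, where the implicit constant is independent of $s$.

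The remaining work is purely geometric: show that, with the $\ndim+1$ pairs chosen just before the lemma, the limiting values $\frac{1}{2}(|\xi\cdot\rho_j|^2+|\xi\cdot\rho_j^\perp|^2)$ are not all equal at any $\xi\in\mathbb S^{\ndim-1}$. I would split into two cases. If $\xi$ has a nonzero component $\xi_\parallel$ in $\textrm{span}\{\wh e_1,\ldots,\wh e_{\ndim-1}\}$, the first $\ndim$ pairs (all with $\rho_j=\wh e_\ndim$) contribute the same $|\xi\cdot\wh e_\ndim|^2$ plus $|\xi_\parallel\cdot\rho_j^\perp|^2$, and choosing the $\ndim$ distinct directions $\rho_j^\perp$ in general position in the hyperplane forces $|\xi_\parallel\cdot\rho_j^\perp|^2$ to be non-constant in $j$ whenever $\xi_\parallel\neq 0$ (otherwise $\xi_\parallel$ would have to satisfy $\ndim$ incompatible equations in $\ndim-1$ unknowns). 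If instead $\xi\parallel\wh e_\ndim$, the first $\ndim$ pairs all produce $\frac{1}{2}|\xi|^2$, while the pair $(\wh e_1,\frac{1}{\sqrt 2}(\wh e_2+\wh e_\ndim))$ produces $\frac{1}{4}|\xi|^2$, separating index $\ndim+1$ from the rest. By compactness of $\mathbb S^{\ndim-1}$ and continuity, the selected pair of limits admits a uniform positive gap, so the $O(s^{-1})$ perturbation is absorbed for $s$ large, verifying \eqref{eqn:ellip-cond-1} pointwise in $\Omega$.

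The main obstacle is uniformity in $(x,\xi)$: since $R_\zeta$ is controlled only in $L^\infty(\Omega)$ with a constant depending on the CGO construction, the pointwise separation of limits must be upgraded to a uniform gap strictly bounded away from zero. Compactness of $\overline\Omega\times\mathbb S^{\ndim-1}$ combined with the explicit two-case argument supplies exactly this, and it also explains why the extra pair $(\wh e_1,\frac{1}{\sqrt 2}(\wh e_2+\wh e_\ndim))$ is needed: it is the one choice that handles the lone direction $\xi\parallel\wh e_\ndim$ left uncovered by the first $\ndim$ CGOs.
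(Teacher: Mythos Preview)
Your proposal is correct and follows essentially the same route as the paper: reduce ellipticity to the rank-$2$ condition \eqref{eqn:ellip-cond-1} on $\mA_{22}$, use the CGO asymptotics $\wh E_\zeta\to -\wh\zeta_\infty$ from Lemma~\ref{lem:CGO}, and verify the two-case geometric separation with the specific $(\rho_j,\rho_j^\perp)$ pairs. Your explicit compactness argument on $\overline\Omega\times\mathbb S^{\ndim-1}$ to upgrade the pointwise separation to a uniform gap absorbing the $O(s^{-1})$ remainder is a welcome clarification that the paper leaves implicit.
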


\begin{remark}\label{rmk:pert}
It is not hard to see that with boundary illuminations from a small neighborhood (in the $H^{l'+3/2}(\partial\Omega;\C^\ndim)$ topology for example) of $\{E_j|_{\partial\Omega}\}$, the operator $\mA$ remains elliptic. 
\end{remark}

Now it remains to verify that the Dirichlet boundary condition \eqref{eqn:BC-D} is the complementing boundary condition to the system $\mA v=b$. 
 
At this point, we need to write down explicitly the symbol of the second order operator $\mA_{12}$ in \eqref{eqn:A0}
\[\mA_{12}(x,\xi)=\left(\begin{array}{cc}-\frac{i\omega}{q_0}(E_1\cdot\xi)\xi & -\frac{\omega^2}{q_0}(E_1\cdot\xi)\xi \\ \frac{i\omega}{q_0^*}(E_1^*\cdot\xi)\xi& -\frac{\omega^2}{q_0^*}(E_1^*\cdot\xi)\xi\\ \vdots & \vdots \\ -\frac{i\omega}{q_0}(E_J\cdot\xi)\xi & -\frac{\omega^2}{q_0}(E_J\cdot\xi)\xi \\ \frac{i\omega}{q_0^*}(E_J^*\cdot\xi)\xi& -\frac{\omega^2}{q_0^*}(E_J^*\cdot\xi)\xi\end{array}\right)_{2J\ndim\times 2}.\]
And we point out that  with the choice of  $E_j$ as above, $\mA_{22}(y,\xi)$ is also full rank for all the boundary points $y\in\partial\Omega$.

Fixing $y\in\partial\Omega$ and $\zeta\in \{\nu(y)\}^\perp\setminus\{0\}$, consider the one dimensional boundary-value problem 
\begin{equation}\label{eqn:BVP-ODE}\left\{\begin{split}&\mA_0(y,\zeta+i\nu\partial_z) u(z)=:P_2 u''+P_1 u'+P_0 u=0\qquad z>0,\\ 
& u(0)=0,\qquad\lim_{z\rightarrow\infty} u(z)=0\end{split}\right.\end{equation}
with constant coefficient matrices (depending on $y$ and $\zeta$) given by
\[\begin{split}P_2(y,\zeta)&=\mA_0(y,i\nu)=\left(\begin{array}{c|c}I_{2J\ndim} & -\mA_{12}(y,\nu) \\ \hline0 & -\mA_{22}(y,\nu)\end{array}\right)\\
P_1(y,\zeta)&=\left(\begin{array}{c|c}0 & i\left[\mA_{12}(y;\zeta,\nu)+\mA_{12}(y;\nu,\zeta)\right] \\ \hline0 & i\left[\mA_{22}(y;\zeta,\nu)+\mA_{22}(y;\nu,\zeta)\right]\end{array}\right) \\
P_0(y,\zeta)&=\mA_0(y,\zeta)=\left(\begin{array}{c|c}-|\zeta|^2I_{2J\ndim} & \mA_{12}(y,\zeta) \\ \hline0 & \mA_{22}(y,\zeta)\end{array}\right)\end{split}\]
where 
\[\mA_{12}(y;\zeta,\nu):=\left(\begin{array}{cc}-\frac{i\omega}{q_0}(E_1\cdot\zeta)\nu & -\frac{\omega^2}{q_0}(E_1\cdot\zeta)\nu \\ \frac{i\omega}{q_0^*}(E_1^*\cdot\zeta)\nu& -\frac{\omega^2}{q_0^*}(E_1^*\cdot\zeta)\nu \\ \vdots & \vdots \\ -\frac{i\omega}{q_0}(E_J\cdot\zeta)\nu & -\frac{\omega^2}{q_0}(E_J\cdot\zeta)\nu \\ \frac{i\omega}{q_0^*}(E_J^*\cdot\zeta)\nu& -\frac{\omega^2}{q_0^*}(E_J^*\cdot\zeta)\nu\end{array}\right)_{2J\ndim\times 2},\]
\[\mA_{22}(y;\zeta,\nu):=\left(\begin{array}{cc}2\kappa (E_1\cdot\zeta)(E_1^*\cdot\nu) & 2\tau(E_1\cdot \zeta)(E_1^*\cdot\nu) \\ \vdots & \vdots \\2\kappa (E_J\cdot\zeta)(E_J^*\cdot\nu) & \tau(E_J\cdot \zeta)(E_J^*\cdot\nu)\end{array}\right)_{J\times 2}.\]

\begin{lemma}\label{lem:ellip-bc}
The system \eqref{eqn:BVP-ODE} has no non-trivial solutions. Therefore, the Dirichlet boundary condition \eqref{eqn:BC-D} is a complementing boundary condition to the operator $\mA(x,D)$ obtained from \eqref{eqn:ME-l-e}, \eqref{eqn:ME-l-e-conj} and \eqref{eqn:Lap-H-l} for $1\leq j\leq J$. 
\end{lemma}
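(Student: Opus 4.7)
The plan is to exploit the block lower-triangular structure of $\mA_0$ in \eqref{eqn:A0}. Splitting $u = (u_E, u_\mu)$ with $u_E \in \C^{2J\ndim}$ and $u_\mu = (\des,\den)^T \in \C^2$, the ODE system \eqref{eqn:BVP-ODE} decouples into
\[
(\partial_z^2 - |\zeta|^2)\,u_E(z) + \mA_{12}(y,\zeta+i\nu\partial_z)\,u_\mu(z) = 0,
\]
\[
\mA_{22}(y,\zeta+i\nu\partial_z)\,u_\mu(z) = 0,
\]
with $u(0) = 0$ and decay at $+\infty$. The strategy is to first prove $u_\mu \equiv 0$ using the overdetermined lower block; the upper block then reduces to the scalar Helmholtz-type ODE $u_E'' - |\zeta|^2 u_E = 0$ componentwise, whose only decaying solution with $u_E(0)=0$ is $u_E \equiv 0$.

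To show $u_\mu \equiv 0$, I seek exponential modes $u_\mu(z) = v\,e^{\lambda z}$ with $\mathrm{Re}(\lambda)<0$, which must satisfy $\mA_{22}(y,\zeta+i\lambda\nu)\,v = 0$. Setting $\xi = \zeta + i\lambda\nu$ and using $\zeta\cdot\nu=0$, the $j$-th row reads
\[
-|E_j|^2(|\zeta|^2-\lambda^2)\,v_1 + 2(\kappa v_1 + \tau v_2)(E_j\cdot\xi)(E_j^*\cdot\xi) = 0.
\]
By Lemma \ref{lem:ellip-op}, one can pick indices $j_1,j_2$ so that the $2\times 2$ minor
\[
m_{j_1 j_2}(\lambda) = 2\tau(|\zeta|^2-\lambda^2)\,|E_{j_1}|^2|E_{j_2}|^2\bigl[(\wh E_{j_1}\cdot\xi)(\wh E_{j_1}^*\cdot\xi) - (\wh E_{j_2}\cdot\xi)(\wh E_{j_2}^*\cdot\xi)\bigr]
\]
is nonzero at $\lambda=0$ (real $\xi = \zeta$). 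Its zeros in $\lambda$ split into those of $|\zeta|^2-\lambda^2$, giving $\lambda = \pm|\zeta|$, and at most two further roots coming from the bracketed factor. At the unique decaying root $\lambda = -|\zeta|$ of the first factor, the full characteristic system reduces to $(\kappa v_1 + \tau v_2)(E_j\cdot\xi)(E_j^*\cdot\xi)=0$ for every $j$; since $(E_j\cdot\xi)(E_j^*\cdot\xi)\neq 0$ for at least one $j$ by the CGO asymptotics of Lemma \ref{lem:CGO}, the kernel collapses to the one-dimensional subspace $\{\kappa v_1 + \tau v_2 = 0\}$, which is then killed by the Dirichlet condition $v = u_\mu(0) = 0$. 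For the remaining potential roots of the bracketed factor, I will invoke the overdeterminacy $J \geq \ndim + 1 = 4$: a third index $j_3$ whose row is linearly independent of rows $j_1,j_2$ at the real point $\xi=\zeta$ (such a $j_3$ exists by the geometric choice of pairs $(\rho_j,\rho_j^\perp)$ in Lemma \ref{lem:ellip-op}) stays independent in a complex neighbourhood of the real axis by continuity, forcing the kernel of the full $J\times 2$ matrix $\mA_{22}(y,\zeta+i\lambda\nu)$ to be trivial at those roots and hence eliminating any associated exponential mode.

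The main obstacle I anticipate is precisely the degenerate root $\lambda = -|\zeta|$, where the upper-block characteristic factor $|\zeta|^2 - \lambda^2$ and the common factor of every lower-block minor vanish simultaneously, so the two blocks cannot be treated in complete isolation. The argument closes because the one-dimensional exponential kernel at this $\lambda$ is killed by the Dirichlet condition on $u_\mu$, whereupon the coupling term $\mA_{12}(y,\zeta+i\nu\partial_z)u_\mu$ in the upper equation vanishes identically, leaving the scalar ODE $u_E''-|\zeta|^2 u_E = 0$ which together with $u_E(0)=0$ and decay at $+\infty$ gives $u_E \equiv 0$. A minor technical step, to be handled in the formal proof, is to rule out polynomial-times-exponential solutions at possibly non-semisimple roots of the reduced first-order system; again the overdeterminacy and the Dirichlet data on $u$ are the key ingredients.
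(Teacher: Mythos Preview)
Your block--triangular reduction and the treatment of the upper $2J\ndim$ block coincide with the paper's argument. The essential divergence, and the place where your proposal breaks, is in the analysis of the lower $J\times 2$ block for $u_\mu=(\des,\den)$.

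The paper does \emph{not} use the overdeterminacy of the $J$ rows for the Lopatinskii condition. It selects two rows $j_1,j_2$ for which $\wt\mA_{22}(y,\nu)$ is nonsingular, works with the resulting determined $2\times2$ constant--coefficient ODE, computes the four eigenvalues $\lambda=\pm|\zeta|$ and $\lambda_{3,4}=(\mathfrak b\,i\pm\sqrt{-\mathfrak b^2+4\mathfrak a\mathfrak c})/(2\mathfrak a)$, and then argues by a case split on the sign of $-\mathfrak b^2+4\mathfrak a\mathfrak c$: either $\lambda_{3,4}$ are purely imaginary (no decay), or one of them decays and its eigenvector is linearly independent from the eigenvector at $-|\zeta|$, so the Dirichlet condition kills both coefficients.

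Your alternative route attempts to show that $-|\zeta|$ is the \emph{only} decaying characteristic value of the full $J\times2$ system, by arguing that the bracket roots $\lambda_{3,4}$ of the $(j_1,j_2)$ minor are not zeros of the full rank--drop locus because ``a third row $j_3$ stays independent in a complex neighbourhood of the real axis by continuity.'' This step fails on two counts. First, at the real base point $\xi=\zeta$ the rows $(a_j(\zeta),b_j(\zeta))$ all lie in $\R^2$, so once $j_1,j_2$ are chosen independent, every other row is automatically a combination of them; there is no meaningful ``third independent row'' at $\lambda=0$ to perturb from. Second, and more importantly, the roots $\lambda_{3,4}$ are in general not close to the real axis (indeed the paper computes them explicitly and they have nonzero imaginary part $\mathfrak b/(2\mathfrak a)$); a continuity argument anchored at $\lambda=0$ gives no information there. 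Consequently your claim that the only decaying mode is the one at $\lambda=-|\zeta|$ is unsupported, and then the statement that the one--dimensional kernel at $-|\zeta|$ ``is killed by the Dirichlet condition'' is circular: it is valid only if no other decaying mode exists, which is precisely what remains unproved. To close the argument you should either follow the paper and carry out the explicit $2\times2$ eigenvalue/eigenvector analysis (including the verification that the eigenvectors $\vec v_2$ and $\vec v_4$ are independent in the case $-\mathfrak b^2+4\mathfrak a\mathfrak c>0$), or give a genuine global argument that all $\binom{J}{2}$ brackets cannot vanish simultaneously at any complex $\lambda\notin\{\pm|\zeta|\}$.
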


\begin{proof}

Notice that the last $(J\times 2)$ equations of the system \eqref{eqn:BVP-ODE} are decoupled as equations of $u_{\ndim-1}$ and $u_{\ndim}$, the last two components of $u$. With $E_j$ ($j=1,\ldots,J$) chosen to be the CGO solutions as above, for every $y\in\partial\Omega$, condition \eqref{eqn:ellip-cond-1} implies that $\mA_{22}(y,\nu)$ has full rank 2. Without loss of generality, we can assume 
\[\wt \mA_{22}(y,\nu):=\left(\begin{array}{cc}a_1(y,\nu) & b_1(y,\nu) \\a_2(y,\nu) & b_2(y,\nu)\end{array}\right)\]
is non-singular. Similarly, denote by $\wt \mA_{22}(y;\zeta,\nu)$ and $\wt \mA_{22}(y;\zeta)$ the matrices of the first two rows of $\mA_{22}(y;\zeta,\nu)$ and $\mA_{22}(y;\zeta)$. 
Then it is sufficient to show that $\wt u:=(u_{\ndim-1}, u_\ndim)^T=0$ is the unique solution to
\begin{equation}\label{eqn:ODE-wtu}
\left\{\begin{split}&-\wt \mA_{22}(y,\nu)\wt u''+i[\wt \mA_{22}(y;\zeta,\nu)+\wt \mA_{22}(y;\nu,\zeta)]\wt u'+\wt \mA_{22}(y,\zeta)\wt u=0\quad z>0,\\ &\wt u(0)=0,\qquad \lim_{z\rightarrow\infty}\wt u(z)=0,\end{split}\right.
\end{equation} 
as then \eqref{eqn:BVP-ODE} is reduced to solving $u_k''-|\zeta|^2u_k=0$, $u_k(0)=u_k(\infty)=0$ for the first $2J\ndim$ components $u_1,\ldots,u_{2J\ndim}$ of $u$. 
We follow the standard procedure of solving the ODE \eqref{eqn:ODE-wtu} by looking for the eigenvalues, that is, the $\lambda$ s.t., 
\[\det\left\{-\lambda^2\wt \mA_{22}(\nu)+i\lambda[\wt \mA_{22}(\zeta,\nu)+\wt \mA_{22}(\nu,\zeta)]+\wt \mA_{22}(\zeta)\right\}=0.\]
Here we omit the $y$-dependence for simple notations. We obtain
\[\lambda_{1,2}=\pm|\zeta|,\quad \lambda_{3,4}=\frac{\mathfrak bi\pm\sqrt{-\mathfrak b^2+4\mathfrak a\mathfrak c}}{2\mathfrak a}\]
where $\mathfrak a, \mathfrak b, \mathfrak c$ are real and given by
\[\begin{split} \mathfrak a&=|\wh E_1\cdot\nu|^2-|\wh E_2\cdot\nu|^2\\
\mathfrak b&=2\textrm{Re}\left\{(\wh E_1\cdot\zeta)(\wh E_1^*\cdot\nu)-(\wh E_2\cdot\zeta)(\wh E_2^*\cdot\nu)\right\}\\
\mathfrak c&=|\wh E_1\cdot\zeta|^2-|\wh E_2\cdot\zeta|^2.
\end{split}\]
Again, w.l.o.g, we could assume $\mathfrak a>0$. 
Then the general solution is given by $\wt u=\displaystyle\sum_{l=1}^4c_je^{\lambda_j t}\vec v_j$ where $c_j\in\C$ and $\vec v_j$ is the complex eigenvector associated to $\lambda_j$, that is, such that
\[\left\{-\lambda_j^2\wt A_{22}(\nu)+i\lambda_j[\wt A_{22}(\zeta,\nu)+\wt A_{22}(\nu,\zeta)]+\wt A_{22}(\zeta)\right\}\vec v_j=0\]
If $-\mathfrak b^2+4\mathfrak a\mathfrak c<0$, as can be shown in $\R^\ndim$ with $\ndim\leq 3$, $\lambda_{3,4}$ are pure imaginary and the boundary condition implies that $\wt u=0$. On the other hand, if $-\mathfrak b^2+4\mathfrak a\mathfrak c>0$, $\lambda_2=-|\zeta|<0$ and $\textrm{Re}(\lambda_4)=-\frac{\sqrt{-\mathfrak b^2+4\mathfrak a\mathfrak c}}{2\mathfrak a}<0$ and the boundary condition gives $c_1=c_3=0$ and $c_2\vec v_2+c_4\vec v_4=0$.
It is then easy to show that $\vec v_2$ and $\vec v_4$ are linearly independent, hence $c_2=c_4=0$, which concludes the proof.

\end{proof}

\begin{proof}[Proof of {\bf Theorem} \ref{thm:main-1}]\quad
This is a direct corollary of Lemma \ref{lem:ellip-op}, Lemma \ref{lem:ellip-bc} and Theorem 1.1 of \cite{So}. 

\end{proof}

\section{Conditions for Ellipticity: estimates for CGO solutions}\label{sec:CGO}

In this section, we show the proof of Lemma \ref{lem:CGO}, that is, to construct the CGO solution \eqref{eqn:E-CGO} to Maxwell's equations, satisfying the estimate \eqref{eqn:CGO-R}. In the literature, such construction for systems \cite{CP, OS}, as well as for scalar elliptic equations \cite{SU} etc., heavily relies on the estimate of the Faddeev kernel. It is an integral operator $G_\zeta$, understood as the inverse of the second order operator $-(\Delta+2i\zeta\cdot\nabla)$ with $\zeta\in\C^\ndim\setminus\{0\}$, defined by
\[G_\zeta(f):=\mathcal{F}^{-1}\left(\frac{\mathcal{F}(f)(\xi)}{\xi^2+2\zeta\cdot\xi}\right)\]
where $\mathcal F$ denotes the Fourier transform. It was shown in the Proposition 3.6 of \cite{SU} that for $|\zeta|\geq c>0$ and $-1<\theta<0$, there exists a constant $C_{\theta,c}$ such that
\begin{equation}\label{eqn:FK-L2delta}
\|G_\zeta\|_{L^2_{\theta+1}\rightarrow L^2_\theta}\leq \frac{C_{\theta,c}}{|\zeta|}
\end{equation} 
where the weighted $L^2$-space is defined by
\[L^2_\theta:=\left\{f~\big|~\|f\|_\theta:= \|\langle x\rangle^\theta f\|_{L^2(\R^d)}<\infty\right\},\qquad \langle x\rangle:=(1+|x|^2)^{1/2}.\]
In general, the compactness estimate \eqref{eqn:FK-L2delta} is enough for a Neumann-series type of construction of $R_\zeta$ as in a CGO solution $E_\zeta=e^{ix\cdot\zeta}(\eta+R_\zeta)$ (e.g., see \cite{CP} for a direct construction of such), and to prove an estimate
\[\|R_\zeta\|_{H^\epsilon(\Omega;\C^\ndim)}\leq C|\zeta|^{\epsilon-1},\qquad \epsilon\in[0,2],\]
provided that $\eta$ is $O(1)$ for $|\zeta|\gg1$. 
In another word, boundedness of $R_\zeta$ with respect to $|\zeta|$ can be obtained at most for its $H^1(\Omega;\C^\ndim)$-norm.  
To show point-wise boundedness as in Lemma \ref{lem:CGO}, especially when $\eta_\zeta$ is of $O(|\zeta|)$ as $|\zeta|\gg1$, we would need an estimate of $G_\zeta$ with higher regularity. Here we point out that similar results were proved in some weaker spaces like Besov spaces \cite{Br} and Bourgain type of spaces \cite{HT}.\\

Let $\Omega$ be a bounded domain in $\R^\ndim$ with a smooth boundary and $\Omega\subset\subset B_r$ for some ball of radius $r>0$. We assume that the background parameters $n_0,\sigma_0$ are as stated in Lemma \ref{lem:CGO}.

To our end, define the fractional operator 
\[(I-\Delta)^{l/2}:~f~\mapsto~\mathcal{F}^{-1}\left(\langle\xi\rangle^l\mathcal F(f)\right),\quad \langle\xi\rangle=(1+|\xi|^2)^{1/2}\]
and denote the weighted Sobolev space $H^l_\delta$ for $l\geq0$ as the completion of $C_0^\infty(\R^\ndim)$ with respect to the norm $\|\cdot\|_{H^l_\theta}$ defined as
\[\|f\|_{H^l_\theta}:=\|(I-\Delta)^{l/2}f\|_{L^2_\theta}.\] 
Notice that $-(\Delta+2i\zeta\cdot\nabla)$ and $(I-\Delta)^l$ commute, therefore \eqref{eqn:FK-L2delta} implies that for $|\zeta|\geq c>0$, 
\begin{equation}\label{eqn:FK-Hdelta}
\|G_\zeta\|_{H^l_{\theta+1}\rightarrow H^l_\theta}\leq \frac{C_{\theta,c}}{|\zeta|},\quad \textrm{ for all }\; l\geq0.
\end{equation}
Similar generalization applies to \cite[Lemma 2.11]{Na}, so we obtain
\begin{equation}\label{eqn:FK-H2}
 \|G_\zeta\|_{H^{l}_{\theta+1}\rightarrow H^{l+2}_\theta}\leq C|\zeta|.
\end{equation}
If $f\in H^{l}$ is compactly supported on $B_r$, \eqref{eqn:FK-H2} implies 
\begin{equation}\label{eqn:FK-H2-B}
\|G_\zeta f\|_{H^{l+2}(B_r)}\leq C|\zeta|\|f\|_{H^l(B_r)},
\end{equation}
and moreover, by interior regularity, the following estimate on a bounded domain is valid 
\begin{equation}\label{eqn:FK-Hs}
\|G_\zeta f\|_{H^{l}(B_r)}\leq C|\zeta|^{-1}\|f\|_{H^l(B_r)},\quad \|G_\zeta f\|_{H^{l+1}(B_r)}\leq C \|f\|_{H^l(B_r)}
\end{equation}
for all $l\geq 0$.
\\

The rest of this section contributes to the construction of $R_\zeta$ using \eqref{eqn:FK-Hdelta} and a similar argument as in \cite{CP}. To show that such argument is independent of the space dimension $\ndim$, 
throughout this section, we generalize our analysis for differential forms. \\

Let $L^2_\theta(\R^\ndim; \Lambda^m\R^\ndim)$ and $H^l_\theta(\R^\ndim; \Lambda^m\R^\ndim)$ denote spaces of $m$-forms whose components are in $L^2_\theta$ and $H^l_\theta$, respectively,
and let $H^l(B_r;\Lambda^m\R^d)$ denote the space of $m$-forms whose components are in $H^l(B)$. Corresponding norms are defined as the sum of the norms of component functions.

Let $l'>\ndim/2$. Recall that $k=\omega \sqrt{n_c}$ and $\gamma_0= q_0/k^2$ so that $\gamma_0-1\in H^{l'+2}(\R^\ndim; \Lambda^0\R^\ndim)\subset C^2(\R^\ndim;\Lambda^0\R^\ndim)$ is compactly supported on $B_r$. 
Let $ H^d (\R^\ndim; \Lambda^l \R^\ndim) $ 
denote the space of $ u \in L^2 (\R^\ndim; \Lambda^l \R^\ndim) $ such that $ du \in L^2 (\R^\ndim; \Lambda^{l + 1} \R^\ndim), $ 
endowed with the norm
\[ \norm{u}{}{H^d (\R^\ndim; \Lambda^l \R^\ndim)} = \left( \norm{u}{2}{L^2 (\R^\ndim; \Lambda^l \R^\ndim)} + \norm{du}{2}{L^2 (\R^\ndim; \Lambda^{l + 1} \R^\ndim)} \right)^{1/2}. \]
Then we can rewrite Maxwell's equations for the 1-form 
$E\in H^d(\R^\ndim;\Lambda^1\R^\ndim)$ 
as
\begin{equation}\label{eqn:ME-1-df}
\left\{\begin{split}&(\delta d-k^2\gamma_0)E=0, \\  &\delta(\gamma_0E)=0.\end{split}\right.
\end{equation}
where $d$ denotes the exterior derivative and $\delta$ denotes its adjoint. We define the conjugate operators on $C^1(\R^\ndim;\Lambda^l\R^\ndim)$
\[\wt d:=e^{-ix\cdot\zeta}\circ d\circ e^{ix\cdot\zeta}=d+i\zeta\wedge,\quad \wt \delta:=e^{-ix\cdot\zeta}\circ d\circ e^{ix\cdot\zeta}=\delta+(-1)^li\zeta\vee,\]
and write $E=e^{ix\cdot\zeta}(\wh\eta+\wh R)$, where $\wh \eta:=\gamma_0^{-\half}\eta$ and $\wh R:=\gamma_0^{-\half}R$.
Then \eqref{eqn:ME-1-df} implies
\begin{equation}\label{eqn:conj-1-df}\left\{\begin{split}
&\wt\delta\wt d \wh R=k^2\gamma_0\wh \eta+k^2\gamma_0\wh R-\wt\delta\wt d\wh\eta,\\ 
&\wt \delta(\gamma_0\wh R)=-\wt\delta(\gamma_0\wh\eta).
\end{split}\right.
\end{equation}
Denoting $\alpha:=d\gamma_0/\gamma_0$, the second equation above gives
\[\wt\delta\wh R=\alpha\vee (\wh\eta+\wh R)-\wt\delta\wh \eta.\]
Applying $\wt d$ and adding to the first equation of \eqref{eqn:conj-1-df}, we obtain
\begin{equation}\label{eqn:R0-df}
-\wt\Delta_H\wh R:=(\wt\delta\wt d+\wt d\wt\delta)\wh R=\wt d\big(\alpha\vee (\wh\eta+\wh R)\big)+k^2\gamma_0(\wh\eta+\wh R)-(\wt\delta\wt d+\wt d\wt\delta)\wh\eta,
\end{equation}
where $-\wt\Delta_H$ is the conjugate Hodge laplacian
\[-\wt\Delta_H=e^{-ix\cdot\zeta}\circ (-\Delta_H)\circ e^{ix\cdot\zeta}=-\Delta_H-2(i\zeta\vee d)+\langle \zeta,\zeta\rangle.\] 
Suppose $\langle\zeta,\zeta\rangle=k^2$. 
By the identities for $1$-forms $u, v$ and $l$-form $w$  
\[d(u\vee v)=(u\vee d) v+(v\vee d) u+u\vee dv+v\vee du,\]
\[u \vee (v \wedge w) - v \wedge (u \vee w) = (-1)^l \inner{u}{v} w,\]
and $d\alpha=0$, we have 
\[
\wt d\big(\alpha\vee (\wh\eta+\wh R)\big)=\alpha\vee \wt d(\wh\eta+\wh R)+(\alpha\vee\wt d)(\wh\eta+\wh R)+\big((\wh\eta+\wh R)\vee d\big)\alpha.
\] 
Therefore \eqref{eqn:R0-df} becomes
\begin{equation}\label{eqn:R1-df}\begin{split}
-\big(\Delta_H+2(i\zeta\vee d)+(\alpha\vee\wt d)\big)\wh R=&\alpha\vee \wt d(\wh\eta+\wh R)+\big((\wh\eta+\wh R)\vee d\big)\alpha+k^2(\gamma_0-1)(\wh\eta+\wh R)\\
&+\big(\Delta_H+2(i\zeta\vee d)+(\alpha\vee\wt d)\big)\wh\eta.
\end{split}\end{equation}
For the operator on the left hand side, we have 
\[
\gamma_0^{\half}\circ-\big(\Delta_H-2(i\zeta\vee d)-(\alpha\vee\wt d)\big)\circ \gamma_0^{-\half}=
-\Delta_H-2(i\zeta\vee d)+\mathfrak{q},
\]
where $\mathfrak{q}:=\frac{1}{4}\langle\alpha,\alpha\rangle-\half\delta\alpha$.
Then, \eqref{eqn:R1-df} gives
\begin{equation}\label{eqn:R2-df}\begin{split}
-\big(\Delta_H+2(i\zeta\vee d)\big)R=&\gamma_0^\half\alpha\vee \wt d(\wh\eta+\wh R)+(R\vee d)\alpha+k^2(\gamma_0-1)R-\mathfrak qR\\
&+(\eta\vee d)\alpha+k^2(\gamma_0-1)\eta-\mathfrak q\eta.
\end{split}\end{equation}
Applying $G_\zeta$ yields the integral equation 
\begin{equation}\label{eqn:R2-df-int}\begin{split}
R=&G_\zeta\big[\gamma_0^\half\alpha\vee\wt d(\wh\eta+\wh R)\big]+G_\zeta\left[(R\vee d)\alpha+k^2(\gamma_0-1)R-\mathfrak qR\right]\\
&+G_\zeta\left[(\eta\vee d)\alpha+k^2(\gamma_0-1)\eta-\mathfrak q\eta\right]
\end{split}\end{equation}
\begin{lemma}\label{lem:4-1}
Suppose $R\in H^{l'}_\delta(\R^\ndim;\Lambda^1\R^\ndim)$ is a solution to the integral equation \eqref{eqn:R2-df-int}. Then $\wh R:=\gamma_0^{-\half}R\in C^2(\R^\ndim;\Lambda^1\R^\ndim)$ satisfies \eqref{eqn:conj-1-df}, that is, $E=\gamma_0^{-\half}e^{ix\cdot\zeta}(\eta+R)$ satisfies \eqref{eqn:ME-1-df}.
\end{lemma}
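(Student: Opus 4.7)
The plan is to verify the two Maxwell equations \eqref{eqn:conj-1-df} starting from the fact that $R$ solves the integral equation \eqref{eqn:R2-df-int}.
I would first bootstrap regularity: since all coefficients ($\gamma_0-1$, $\alpha$, $\mathfrak q$) are compactly supported and lie in $H^{l'}$, iterating the gain-of-two-derivatives bound \eqref{eqn:FK-H2-B} for $G_\zeta$ in \eqref{eqn:R2-df-int} improves $R$ from $H^{l'}_\theta$ to $H^{l'+2}_{\mathrm{loc}}$ in finitely many steps. Since $l'>\ndim/2$, Sobolev embedding then gives $\wh R = \gamma_0^{-1/2}R \in C^2(\R^\ndim;\Lambda^1\R^\ndim)$, which is the regularity claimed.

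Next, applying $-(\Delta_H + 2(i\zeta\vee d))$ to both sides of \eqref{eqn:R2-df-int} recovers \eqref{eqn:R2-df}. The scalar similarity $R=\gamma_0^{1/2}\wh R$, combined with the conjugation identity between $-\Delta_H - 2(i\zeta\vee d) + \mathfrak q$ and $-(\Delta_H - 2(i\zeta\vee d) - (\alpha\vee\wt d))$, reverses \eqref{eqn:R2-df} into \eqref{eqn:R1-df}, and inverting the $\wt d(\alpha\vee\cdot)$ expansion used in the derivation yields \eqref{eqn:R0-df},
\[
(\wt\delta\wt d + \wt d\wt\delta)\wh R = \wt d\bigl(\alpha\vee(\wh\eta+\wh R)\bigr) + k^2\gamma_0(\wh\eta+\wh R) - (\wt\delta\wt d + \wt d\wt\delta)\wh\eta.
\]
This is the sum of the first Maxwell equation and $\wt d$ applied to (a rearrangement of) the second, so the content of the proof is to show that each summand vanishes separately.

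Introduce the $0$-form
\[
\phi := \wt\delta\bigl(\gamma_0(\wh R + \wh\eta)\bigr) = \gamma_0\bigl(\wt\delta\wh R - \alpha\vee(\wh\eta+\wh R) + \wt\delta\wh\eta\bigr),
\]
which is precisely the residual of the second equation of \eqref{eqn:conj-1-df}. The displayed equation above rearranges to
\[
\wt\delta\wt d\wh R - k^2\gamma_0(\wh\eta+\wh R) + \wt\delta\wt d\wh\eta = -\wt d\bigl(\gamma_0^{-1}\phi\bigr).
\]
Apply $\wt\delta$: by $\wt\delta^2 = 0$ the left-hand side collapses to $-k^2\wt\delta(\gamma_0(\wh\eta+\wh R)) = -k^2\phi$, while a direct computation on the $0$-form $\psi := \gamma_0^{-1}\phi$, using $\langle\zeta,\zeta\rangle = k^2$, gives $\wt\delta\wt d\psi = (-\Delta - 2i\zeta\cdot\nabla + k^2)\psi$. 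Equating and simplifying,
\[
-(\Delta + 2i\zeta\cdot\nabla)\psi = k^2(\gamma_0-1)\psi,
\]
a scalar equation with compactly supported potential.

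The main obstacle is then concluding $\psi\equiv 0$. I would recast the above as the fixed-point identity $\psi = G_\zeta\bigl(k^2(\gamma_0-1)\psi\bigr)$, verify from the regularity proved in step one that $\psi\in L^2_\theta$ for some $\theta\in(-1,0)$ (the compact support of $\gamma_0-1$ lets multiplication move $\psi$ from $L^2_\theta$ into $L^2_{\theta+1}$), and then invoke the Faddeev-kernel estimate \eqref{eqn:FK-L2delta}: for $|\zeta|>s_0$ large enough, $G_\zeta\circ k^2(\gamma_0-1)$ is a strict contraction on $L^2_\theta$, forcing $\psi=0$ and hence $\phi=0$. This recovers the second equation of \eqref{eqn:conj-1-df}, and substituting $\wt d\phi = 0$ back into the combined equation gives the first. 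Therefore $E = \gamma_0^{-1/2}e^{ix\cdot\zeta}(\eta+R)$ solves \eqref{eqn:ME-1-df}.
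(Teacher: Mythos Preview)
Your proposal is correct and follows essentially the same route as the paper: both establish $C^2$ regularity of $R$ via the Faddeev-kernel smoothing estimate and Sobolev embedding, reverse the algebraic derivation back to \eqref{eqn:R0-df}, introduce the scalar residual of the divergence equation (your $\psi$ is exactly the paper's $-u = \gamma_0^{-1}\wt\delta(\gamma_0(\wh\eta+\wh R))$), derive the homogeneous equation $-(\Delta+2i\zeta\cdot\nabla)\psi = k^2(\gamma_0-1)\psi$, and conclude $\psi=0$ from the contraction property of $G_\zeta$ for $|\zeta|$ large. The only cosmetic difference is that you phrase the regularity step as an explicit bootstrap (which is slightly more careful about the derivative in the $\wt d\wh R$ term), whereas the paper states it in one line.
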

\begin{proof}
First we remark that $c^{-1}<\gamma_0<c$ for some constant $c>1$, then it can be shown that $\alpha=d\gamma_0/\gamma_0$ and $\gamma_0^{1/2}\alpha$ belong to $H^{l'+1}(\R^\ndim;\Lambda^1\R^\ndim)$ and they are compactly supported on $B_r$. 
Then since $H^{l}$ for $l\geq \ndim/2$ is an algebra, it is easy to see that $\mathfrak q=\frac{1}{4}\langle\alpha,\alpha\rangle-\frac{1}{2}\delta\alpha \in H^{l'}(\R^\ndim;\Lambda^0\R^\ndim)$ is also compactly supported on $B_r$. 
Then one can easily see that the right hand side of \eqref{eqn:R2-df} is in $H^{l'}_{\theta+1}$, by \eqref{eqn:FK-H2} and Sobolev embedding, we have $R\in C^2(\R^\ndim;\Lambda^1\R^\ndim)$. 

Hence, the above derivation from \eqref{eqn:R0-df} to \eqref{eqn:R2-df-int} can be reversed to obtain that $\wh R\in C^2(\R^\ndim;\Lambda^1\R^\ndim)$ satisfies \eqref{eqn:R0-df}, that is,
\begin{equation}\label{eqn:R0-df-1}
\wt\delta\wt d(\wh\eta+\wh R)=\wt d\left(-\wt\delta(\wh \eta+\wh R)+\alpha\vee(\wh\eta+\wh R)\right) + k^2\gamma_0(\wh\eta+\wh R).
\end{equation}
Denote the 0-form
\[u:=-\wt\delta(\wh\eta+\wh R)+\alpha\vee(\wh\eta+\wh R)=-\gamma_0^{-1}\wt\delta\left(\gamma_0(\wh\eta+\wh R)\right).\]
Applying $\wt\delta$ to \eqref{eqn:R0-df-1} and adding to $\wt\delta u=0$ (for it is a 0-form), we have eventually
\[-(\Delta_H+2(i\zeta\vee d))u=k^2(\gamma_0-1)u.\]
This equation by \eqref{eqn:FK-Hdelta} admits only the trivial solution. Hence $u=0$, which implies the second equation of \eqref{eqn:conj-1-df}, furthermore \eqref{eqn:R0-df-1} becomes the first equation of \eqref{eqn:conj-1-df}. This completes the proof. 
\end{proof}

Next, to solve the integral equation \eqref{eqn:R2-df-int}, we define $Q:=\wt d(\wh\eta+\wh R)$. First note that $\wt dQ=0$. 
Then, by the first equation of \eqref{eqn:conj-1-df},
\[\wt \delta Q=k^2\gamma_0(\wh\eta+\wh R),\]
which gives 
\[\wt d\wt\delta Q=k^2\gamma_0^\half \alpha\wedge(\eta+R)+k^2\gamma_0Q.
\]
Together with \eqref{eqn:R2-df}, we obtain a system of $R$ and $Q$
\begin{equation}\label{eqn:RQ}\left\{\begin{split}
-\big(\Delta_H+2(i\zeta\vee d)\big)R=&\gamma_0^\half\alpha\vee Q+(R\vee d)\alpha+k^2(\gamma_0-1)R-\mathfrak qR\\
&+(\eta\vee d)\alpha+k^2(\gamma_0-1)\eta-\mathfrak q\eta\\
-\big(\Delta_H+2(i\zeta\vee d)\big)Q=&k^2\gamma_0^\half\alpha\wedge R+k^2(\gamma_0-1)Q+k^2\gamma_0^\half\alpha\wedge\eta.
\end{split}\right.\end{equation}

\begin{lemma}\label{lem:RQ}
For $|\zeta|$ large enough, we have that \eqref{eqn:RQ} admits a unique solution $R\in H^{l'}_\theta(\R^\ndim;\Lambda^1\R^\ndim)$, $Q\in H^{l'}_\theta(\R^\ndim;\Lambda^2\R^\ndim)$ satisfying
\begin{equation}\label{eqn:est-RQ}
\|R\|_{H^{l'}_\theta(\R^\ndim;\Lambda^1\R^\ndim)}+\|Q\|_{H^{l'}_\theta(\R^\ndim;\Lambda^2\R^\ndim)}\leq C|\zeta|^{-1}|\eta|.
\end{equation}
\end{lemma}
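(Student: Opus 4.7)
The plan is to recast \eqref{eqn:RQ} as a fixed-point equation on the Banach product space
\[
X:=H^{l'}_\theta(\R^\ndim;\Lambda^1\R^\ndim)\times H^{l'}_\theta(\R^\ndim;\Lambda^2\R^\ndim),
\]
and apply the Banach contraction principle. Applying the Faddeev kernel $G_\zeta$ componentwise to both equations of \eqref{eqn:RQ} -- which inverts the leading operator $-(\Delta_H+2(i\zeta\vee d))$ after the usual flat-space componentwise identification -- rewrites the system as a fixed-point relation
\[
(R,Q)=\mathcal T(R,Q)+(F_1,F_2),
\]
where $\mathcal T$ is a linear operator collecting all the $(R,Q)$-dependent terms on the right-hand side of \eqref{eqn:RQ} conjugated by $G_\zeta$, and $(F_1,F_2)$ collects the $\eta$-sources.

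The key structural observation is that every coefficient appearing in $\mathcal T$ and in $(F_1,F_2)$, namely $\alpha=d\gamma_0/\gamma_0$, $\gamma_0^{1/2}\alpha$, $\gamma_0-1$, and $\mathfrak q=\tfrac14\langle\alpha,\alpha\rangle-\tfrac12\delta\alpha$, is compactly supported in $B_r$ and of class $H^{l'+1}$, $H^{l'+1}$, $H^{l'+2}$, $H^{l'}$ respectively. Because $l'>\ndim/2$ makes $H^{l'}$ a Banach algebra and because the weight $\langle x\rangle^{\theta+1}$ is bounded on the compact set $B_r$, multiplication by any one of these coefficients is bounded from $H^{l'}_\theta(\R^\ndim;\Lambda^m\R^\ndim)$ into $H^{l'}_{\theta+1}(\R^\ndim;\Lambda^m\R^\ndim)$ uniformly in $|\zeta|$. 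The term $(R\vee d)\alpha$ requires attention only in that the exterior derivative actually differentiates the compactly supported $\alpha$, not $R$, so it is in fact a multiplication of $R$ by a compactly supported $H^{l'}$ coefficient and obeys the same bound.

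Combining these mapping properties with the key Faddeev bound \eqref{eqn:FK-Hdelta} yields
\[
\|\mathcal T(R,Q)\|_X\leq \frac{C}{|\zeta|}\|(R,Q)\|_X,\qquad \|(F_1,F_2)\|_X\leq\frac{C|\eta|}{|\zeta|}.
\]
For $|\zeta|$ large enough that $C/|\zeta|<1$, $\mathcal T$ is a strict linear contraction on $X$; Banach's theorem supplies a unique solution $(R,Q)\in X$, and summing the resulting geometric series gives the asserted bound $\|(R,Q)\|_X\leq C|\zeta|^{-1}|\eta|$.

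The main technical obstacle is the first step: verifying that $G_\zeta$, built as a scalar parametrix for $-(\Delta+2i\zeta\cdot\nabla)$, can be applied componentwise to invert $-(\Delta_H+2(i\zeta\vee d))$ on differential forms. Any discrepancy between the componentwise scalar operator and the form operator must be shown to either cancel (via the identities used to pass from \eqref{eqn:conj-1-df} to \eqref{eqn:R2-df}) or to consist of lower-order, compactly supported commutator terms that can be absorbed into $\mathcal T$ and bounded by the same $C/|\zeta|$ contraction estimate. Once this bookkeeping is in place, the remainder is a routine Neumann-series argument driven entirely by \eqref{eqn:FK-Hdelta}.
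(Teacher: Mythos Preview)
Your argument is correct and is essentially the paper's own proof: the paper writes out the Neumann series $R=\sum R_m$, $Q=\sum Q_m$ explicitly, which is exactly the fixed-point iteration for your linear contraction $\mathcal T$, and the estimates you state (multiplication by compactly supported $H^{l'}$ coefficients sends $H^{l'}_\theta$ into $H^{l'}_{\theta+1}$, then \eqref{eqn:FK-Hdelta} gives the $|\zeta|^{-1}$ gain) are precisely the ones the paper uses.

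One clarification on your ``main technical obstacle'': there is in fact no discrepancy to absorb. In flat $\R^\ndim$ the Hodge Laplacian $\Delta_H$ acts as the scalar Laplacian on each component of a form, and in the paper's notation $(\zeta\vee d)$ denotes the directional derivative $\sum_j\zeta_j\partial_j$ acting componentwise (this is how the identity $-\wt\Delta_H=-\Delta_H-2(i\zeta\vee d)+\langle\zeta,\zeta\rangle$ is derived). Hence $-(\Delta_H+2(i\zeta\vee d))$ is literally $-(\Delta+2i\zeta\cdot\nabla)$ on each component, and applying $G_\zeta$ componentwise inverts it exactly, with no commutator remainder. Your contraction argument then goes through verbatim.
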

\begin{proof}

The solution of \eqref{eqn:RQ} is constructed using the decomposition
\[R=\sum_{m=0}^\infty R_m,\quad Q=\sum_{m=0}^\infty Q_m\]
where the first pair $(R_0, Q_0)$ satisfies
\begin{equation}\label{eqn:RQ-0}\left\{\begin{split}
-\big(\Delta_H+2(i\zeta\vee d)\big)R_0=&(\eta\vee d)\alpha+k^2(\gamma_0-1)\eta-\mathfrak q\eta,\\
-\big(\Delta_H+2(i\zeta\vee d)\big)Q_0=&k^2\gamma_0^\half\alpha\wedge\eta,
\end{split}\right.\end{equation}
and for $m>0$,
\begin{equation}\label{eqn:RQ-m}\left\{\begin{split}
-\big(\Delta_H+2(i\zeta\vee d)\big)R_m=&M_1(R_{m-1},Q_{m-1})\\
:=&\gamma_0^\half\alpha\vee Q_{m-1}+(R_{m-1}\vee d)\alpha+k^2(\gamma_0-1)R_{m-1}-\mathfrak q R_{m-1},\\
-\big(\Delta_H+2(i\zeta\vee d)\big)Q_m
=&M_2(R_{m-1},Q_{m-1})\\
:=&k^2\gamma^\half\alpha\wedge R_{m-1}+k^2(\gamma_0-1)Q_{m-1}.
\end{split}\right.\end{equation}

By the beginning remark, the right hand side forms of \eqref{eqn:RQ-0} are both in $H^{l'}$ and compactly supported on $B_r$, hence in $H^{l'}_{\delta+1}$. Then \eqref{eqn:FK-Hdelta} implies that for $|\zeta|$ large enough, $R_0\in H^{l'}_\theta(\R^\ndim;\Lambda^1\R^\ndim)$ and $Q_0\in H^{l'}_\theta(\R^\ndim;\Lambda^2\R^\ndim)$. Moreover, one has
\[\|R_0\|_{H^{l'}_\theta}+\|Q_0\|_{H^{l'}_\theta}\leq C(\theta,\gamma_0)|\zeta|^{-1}|\eta|\]
for some positive constant $C(\theta,\gamma_0)$.

Suppose that $R_{m-1}\in H^{l'}_\theta(\R^\ndim;\Lambda^1\R^\ndim)$ and $Q_{m-1}\in H^{l'}_\theta(\R^\ndim;\Lambda^2\R^\ndim)$. Notice that the operators $M_1$ and $M_2$ defined in \eqref{eqn:RQ-m} are essentially multiplications of component functions of forms by compactly supported $H^{l'}$ parameters. By a similar argument in proving \cite[equation (23)]{BU-IP-10}, we can show that $M_1(R_{m-1},Q_{m-1})\in H^{l'}_{\theta+1}(\R^\ndim;\Lambda^1\R^\ndim)$, $M_2(R_{m-1},Q_{m-1})\in H^{l'}_{\theta+1}(\R^\ndim;\Lambda^2\R^\ndim)$ and 
\[\|M_1(R_{m-1},Q_{m-1})\|_{H^{l'}_{\theta+1}}+\|M_2(R_{m-1},Q_{m-1})\|_{H^{l'}_{\theta+1}}\leq C_{\gamma_0}\left(\|R_{m-1}\|_{H^{l'}_\theta}+\|Q_{m-1}\|_{H^{l'}_\theta}\right).\]
Then by \eqref{eqn:FK-Hdelta}, we obtain
\[\|R_m\|_{H^{l'}_\theta}+\|Q_m\|_{H^{l'}_\theta}\leq C|\zeta|^{-1}\left(\|R_{m-1}\|_{H^{l'}_\theta}+\|Q_{m-1}\|_{H^{l'}_\theta}\right)\]
By taking $|\zeta|$ large such that $C|\zeta|^{-1}<\half$, above estimates yield
\[\|R_m\|_{H^{l'}_\theta}+\|Q_m\|_{H^{l'}_\theta}\leq \frac{1}{2^m}C|\zeta|^{-1}|\eta|,\quad m\geq0.\]
Summing up the geometric series proves \eqref{eqn:est-RQ}. 

The uniqueness of the solution is a consequence of the unique solvability of $-(\Delta_H+2(i\zeta\vee d))$ and estimate \eqref{eqn:FK-Hdelta}. 
\end{proof}


It remains to show that $R$ also uniquely solves \eqref{eqn:R2-df-int} in $H^{l'}_\theta$. 

\begin{lemma}\label{lem:4-3}
For $|\zeta|$ large enough, there exists a unique solution $R\in H^{l'}_\theta(\R^\ndim;\Lambda^1\R^\ndim)$ to \eqref{eqn:R2-df-int}. Moreover, it satisfies the estimate
\begin{equation}\label{eqn:est-R}
\|R\|_{H^{l'}_\theta}\leq C|\zeta|^{-1}|\eta|.
\end{equation}
\end{lemma}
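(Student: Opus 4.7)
The plan is to deduce Lemma \ref{lem:4-3} from Lemma \ref{lem:RQ} by showing that the auxiliary $2$-form $Q$ arising in the system \eqref{eqn:RQ} automatically coincides with $\wt d(\wh\eta + \wh R)$. Once that identification is made, the first equation of \eqref{eqn:RQ} becomes exactly the second-order equation \eqref{eqn:R2-df} whose $G_\zeta$-image is \eqref{eqn:R2-df-int}.

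For existence, I would first apply Lemma \ref{lem:RQ} to obtain $(R, Q) \in H^{l'}_\theta(\R^\ndim;\Lambda^1\R^\ndim) \times H^{l'}_\theta(\R^\ndim;\Lambda^2\R^\ndim)$ satisfying \eqref{eqn:RQ} with the quantitative bound \eqref{eqn:est-RQ}. Writing $L_\zeta := -(\Delta_H + 2(i\zeta\vee d))$, the identity $-\wt\Delta_H = L_\zeta + k^2$ together with $[d,\Delta_H]=0$ yields the commutation $[\wt d, L_\zeta] = 0$. Setting $U := \wh\eta + \wh R$ and $Q' := \wt d U$, a direct computation using this commutation, the first equation of \eqref{eqn:RQ} rewritten via $R + \eta = \gamma_0^{1/2}U$ and $\gamma_0^{1/2}\alpha = \gamma_0^{-1/2}d\gamma_0$, should give
\[ L_\zeta Q' = k^2 \, d\gamma_0 \wedge U + k^2(\gamma_0 - 1) Q'. \]
The second equation of \eqref{eqn:RQ}, similarly rewritten, provides the identical equation with $Q$ in place of $Q'$. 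Subtracting, $P := Q - Q'$ satisfies $P = k^2 G_\zeta[(\gamma_0 - 1) P]$. Since $\gamma_0 - 1$ is compactly supported, the weighted bound \eqref{eqn:FK-Hdelta} gives $\|P\|_{H^{l'}_\theta} \leq C|\zeta|^{-1} \|P\|_{H^{l'}_\theta}$, forcing $P = 0$ once $|\zeta|$ is large. Thus $Q = \wt d(\wh\eta + \wh R)$; inserting this into the first equation of \eqref{eqn:RQ} and applying $G_\zeta$ recovers \eqref{eqn:R2-df-int}, and \eqref{eqn:est-R} is immediate from \eqref{eqn:est-RQ}.

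For uniqueness, suppose $R_1, R_2 \in H^{l'}_\theta(\R^\ndim;\Lambda^1\R^\ndim)$ both solve \eqref{eqn:R2-df-int}. Lemma \ref{lem:4-1} then guarantees that each $\wh R_i = \gamma_0^{-1/2}R_i$ lies in $C^2$ and satisfies the Maxwell system \eqref{eqn:conj-1-df}, so a standard bootstrap through the equation for $\wt d U_i$ places the associated $Q_i := \wt d(\wh\eta + \wh R_i)$ in $H^{l'}_\theta(\R^\ndim;\Lambda^2\R^\ndim)$. Reversing the derivation of \eqref{eqn:RQ} from \eqref{eqn:conj-1-df} shows that each pair $(R_i, Q_i)$ solves the system \eqref{eqn:RQ}, and the uniqueness assertion of Lemma \ref{lem:RQ} then yields $R_1 = R_2$.

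The main technical obstacle will be the careful verification of the identity $L_\zeta Q' = k^2\, d\gamma_0\wedge U + k^2(\gamma_0-1)Q'$. It rests on the commutation $[\wt d, L_\zeta] = 0$, on Cartan-type identities relating $d$, $\wedge$, and $\vee$ used earlier in Section \ref{sec:CGO}, and on the conjugation relating $\gamma_0^{1/2}\circ L_\zeta \circ \gamma_0^{-1/2}$ to $L_\zeta$ that already underlies the passage from \eqref{eqn:R1-df} to \eqref{eqn:R2-df}. Once this algebraic identity is secured, the weighted estimate \eqref{eqn:FK-Hdelta} closes both the existence and the uniqueness arguments.
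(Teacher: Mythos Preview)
Your uniqueness argument is fine and essentially coincides with the paper's: a solution $R$ of \eqref{eqn:R2-df-int} together with $Q:=\wt d(\wh\eta+\wh R)$ solves \eqref{eqn:RQ}, and Lemma~\ref{lem:RQ} gives uniqueness.

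Your existence argument, however, has a genuine gap at the step where you claim that the first equation of \eqref{eqn:RQ} yields
\[
L_\zeta Q' = k^2\,d\gamma_0\wedge U + k^2(\gamma_0-1)Q'.
\]
The first equation of \eqref{eqn:RQ} contains the term $\gamma_0^{1/2}\alpha\vee Q$, not $\gamma_0^{1/2}\alpha\vee Q'$. If you un-conjugate it to an equation for $U=\wh\eta+\wh R$ and then use the Cartan identity $\wt d(\alpha\vee U)=\alpha\vee\wt dU+(\alpha\vee\wt d)U+(U\vee d)\alpha$ (valid since $d\alpha=0$), you actually obtain
\[
L_\zeta U=\wt d(\alpha\vee U)+\alpha\vee(Q-Q')+k^2(\gamma_0-1)U,
\]
and applying $\wt d$ gives
\[
L_\zeta Q' = \wt d\big(\alpha\vee P\big)+k^2\,d\gamma_0\wedge U + k^2(\gamma_0-1)Q',\qquad P:=Q-Q'.
\]
Subtracting the second equation of \eqref{eqn:RQ} then leaves
\[
L_\zeta P = k^2(\gamma_0-1)P-\wt d(\alpha\vee P).
\]
The extra first-order term $\wt d(\alpha\vee P)=d(\alpha\vee P)+i\zeta\wedge(\alpha\vee P)$ is \emph{not} controlled by the $|\zeta|^{-1}$ decay of $G_\zeta$: both $G_\zeta d$ and $G_\zeta(i\zeta\wedge\cdot)$ are merely bounded (with norm independent of $|\zeta|$) from $H^{l'}_{\theta+1}$ to $H^{l'}_\theta$, so the resulting fixed-point equation for $P$ is not a contraction for large $|\zeta|$, and you cannot conclude $P=0$.

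The paper avoids this obstruction by reversing the direction of the argument. It recasts \eqref{eqn:R2-df-int} as $(I-K)R=f$ on $L^2(B_r;\Lambda^1\R^\ndim)$, shows $K$ is compact (via \eqref{eqn:FK-H2-B}--\eqref{eqn:FK-Hs} and compact embedding $H^1\hookrightarrow L^2$), and checks injectivity of $I-K$: any homogeneous solution $R^h$, paired with $Q^h:=\wt d\wh R^h$, satisfies the \emph{homogeneous} system \eqref{eqn:RQ}---here there is no circularity because $Q^h$ is \emph{defined} as $\wt d\wh R^h$, so the first equation of \eqref{eqn:RQ} is literally the homogeneous \eqref{eqn:R2-df}. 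Lemma~\ref{lem:RQ} then forces $R^h=0$. Fredholm gives an $L^2$ solution, and a bootstrap through \eqref{eqn:RQ} upgrades it to $H^{l'}_\theta$ with the estimate \eqref{eqn:est-R}. The asymmetry is essential: passing from the integral equation to the system is free (one defines $Q$ from $R$), but passing from the system back to the integral equation requires the identification $Q=\wt d(\wh\eta+\wh R)$, which your argument does not establish.
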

\begin{proof} 
The proof is similar to that of \cite[Theorem 3.1]{CP}. By Fredholm alternative, we need to show \eqref{eqn:R2-df-int}, rewritten as
\[(I-K)R=G_\zeta\big[\gamma_0^\half\alpha\vee\wt d\wh\eta+(\eta\vee d)\alpha+k^2(\gamma_0-1)\eta-\mathfrak q\eta\big]:=f\]
is of Fredholm type in $L^2(B_r;\Lambda^1\R^\ndim)$ and the kernel is zero, where 
\[K(R):=G_\zeta\big[\gamma_0^\half\alpha\vee \wt d\wh R\big]-G_\zeta\left[(R\vee d)\alpha+k^2(\gamma_0-1)R-\mathfrak qR\right].\]

By \eqref{eqn:FK-H2-B} and \eqref{eqn:FK-Hs}, both terms in $K$ are bounded  (the constant is linear in $|\zeta|$) linear operators from $L^2(B_r; \Lambda^1\R^\ndim)$ to $H^1(B_r; \Lambda^1\R^\ndim)$. Since $H^1(B_r; \Lambda^1\R^\ndim)$ is compactly embedded in $L^2(B_r; \Lambda^1\R^\ndim)$, $K$ is compact. 
This proves that $I-K$ is Fredholm type. 

To show that the kernel of $I-K$ in $L^2(B_r;\Lambda^1\R^\ndim)$ is zero, consider the solution $R^h\in L^2(B_r; \Lambda^1\R^\ndim)$ to the homogeneous equation $R^h=K(R^h)$. Since all the parameters are supported on $B_r$, $R^h$ can be extended to a solution in $L^2_\theta(\R^\ndim;\Lambda^1\R^\ndim)$. Then the extension satisfies the homogeneous integral equations corresponding to \eqref{eqn:RQ}. By the previous Lemma \ref{lem:RQ} (in which the uniqueness can also be shown in $L^2_\theta$), we have $R^h=0$. 

Therefore, $I-K$ is uniquely solvable in $L^2(B_r;\Lambda^1\R^\ndim)$. It is not hard to see that $f\in L^2(B_r;\Lambda^1\R^\ndim)$ with bounded norm in $|\zeta|$. This implies that
\[R=(I-K)^{-1}f\in L^2(B_r;\Lambda^1\R^\ndim).\]
Extend  and plug it into the right hand side of \eqref{eqn:R2-df-int}. Note that the parameters are all compactly supported in $B_r$ and $f\in H^{l'+2}_{\theta}(\R^\ndim;\Lambda^1\R^\ndim)$, we have $R\in H^1_\theta(\R^\ndim;\Lambda^1\R^\ndim)$ solves \eqref{eqn:R2-df-int} on $\R^\ndim$. 

Finally,  this $H^1_\theta$ solution $R$ to \eqref{eqn:R2-df-int} and $Q:=\wt d(\wh\eta+\wh R)\in L^2_\theta(\R^\ndim;\Lambda^2\R^\ndim)$ must satisfy \eqref{eqn:RQ}, therefore by Lemma \ref{lem:RQ}, we obtain $R\in H^{l'}_\theta(\R^\ndim;\Lambda^1\R^\ndim)$, the uniqueness and the estimate \eqref{eqn:est-R}.
\end{proof}

\begin{proof}[Proof of Lemma \ref{lem:CGO}] Let $\zeta$ be as in \eqref{eqn:zeta} and $\eta=\eta_\zeta$ be as in \eqref{eqn:eta-zeta}. By Lemma \ref{lem:4-1} and Lemma \ref{lem:4-3}, we obtain, for $|\zeta|$ large enough, a solution $E_\zeta\in H^1_{\textrm{loc}}(\R^\ndim;\Lambda^1\R^\ndim)$ to Maxwell's equations \eqref{eqn:ME-1-df}, of the form \eqref{eqn:E-CGO} with $R_\zeta\in H^{l'}_\theta(\R^\ndim;\Lambda^1\R^\ndim)$ satisfying 
\[\|R_\zeta\|_{H^{l'}_\theta}\leq C|\zeta|^{-1}|\eta_\zeta|\leq C.\]
This implies
\[\|R_\zeta\|_{H^{l'}(\Omega;\Lambda^1\R^\ndim)}\leq C.\]
Then by Sobolev embedding, we have $R_\zeta$ bounded point-wise by $C$.
\end{proof}

\cout{
\section{Discussion on linear injectivity and invertibility of non-linear inverse problems}
\label{sec:nonlinear}
The nonlinear local uniqueness and stability for some inverse problems, as pointed out in \cite{StU}, can be obtained by showing that the linearized operator is injective with closed range. Let $J=d+1$ in $\R^d$. For our inverse problem 
\begin{equation}\left\{\begin{split}\left(\Delta+\frac{1}{q}[\nabla\nabla\cdot,q]+q\right)\wt E_j=&\;0\\
\Delta\left(\sigma(x)|\wt E_j(x)|^2\right)=&\;\Delta H_j
\end{split}\right. \qquad\textrm{in }\;\Omega,\quad \wt E_j|_{\partial\Omega}=f_j,\quad 1\leq j\leq J,\end{equation}
we have shown that the linearized operator $$\mathcal{A}:\ X\,\mapsto\,\left(\begin{array}{c}A(x,D)X \\X|_{\partial\Omega}\end{array}\right),$$ over-determined, is {\em upper semi-Fredholm}\footnote{An operator is called upper semi-Fredholm if there exists a bounded left regularizer.}, hence satisfies the stability estimate \eqref{eqn:stability} modulus its kernel, given properly chosen $\{f_j\}$. With $\mathcal{A}$ being injective, or equivalently $\mathcal{A}^*\mathcal{A}$ being injective by \cite[Lemma 1]{StU}, we would have $C_2=0$ and \eqref{eqn:stability} is equivalent to the closed range condition. \\

The generic injectivity and the local injectivity for small domains have been shown in \cite{BM-LINUMOT-13} for the over-determined elliptic operators of interest. We therefore describe the results in details for our inverse problems and refer readers to \cite{BM-LINUMOT-13} for proofs. Consider the linearized second order differential operator 
\[A(x,D):=A_0(x,D)+A_L(x,D):\;H^{l'+2}(\Omega;\C^{3J+2})\rightarrow H^{l'}(\Omega;\R^{3J})\]
where $A_0$ is the leading second order operator and $A_L$ denotes the lower order terms. Our purpose is to uniquely solve the normal problem
\begin{equation}\label{eqn:normal}A^*AX=A^*b\quad \textrm{in }\;\Omega, \quad X|_{\partial\Omega}=0,\;\partial_\nu X=g\end{equation}
where $X$ and $b$ are defined as in \eqref{eqn:X-b}, and $g$ is assumed to be known boundary data. 

First, we obtain for real-analytic operators the following by Holmgren's unique contiuation theorem (see e.g., \cite[\S 5.3]{Ho}) for differential equations. 
\begin{lemma}[\cite{BM-LINUMOT-13}]
Suppose that background coefficients $(\sigma_0, n_0)$ and background solutions $E_j$ ($1\leq j\leq J$) are all real-analytic so that $A^*A$ is a real-analytic differential operator. Assume moreover that $A_0$ is elliptic. Then \eqref{eqn:normal} admits a unique solution satisfying 
\[\sum_{j=1}^J\|\deE_j\|_{H^{l'+2}(\Omega;\C^3)}+\|\des\|_{H^{l'+2}(\Omega)}+\|\den\|_{H^{l'+2}(\Omega)}\leq C_1\sum_{j=1}^J\|\Delta\deH_j\|_{H^{l'}(\Omega)}+C_\varepsilon\|g\|_{H^{l'+1/2}(\partial\Omega;\R^{3J+2})}.\]
\end{lemma}

By Remark \ref{rmk:pert} and the density of real-analytic functions, for any real-analytic $(\sigma_0,n_0)$, there exist real-analytic boundary conditions $\{f_j\}$ (hence the background solutions $E_{f_j}$ are real-analytic) such that $A_0$ is elliptic. Then the Lemma above and a perturbation argument (within a small spectra radius) provide us with the following generic injectivity. 

\begin{theorem}[\cite{BM-LINUMOT-13}] There is a dense open set of parameters $(\sigma_0,n_0)$ in $H^{l'+2}(\Omega)\times H^{l'+2}(\Omega)$ such that for an open set (in the $H^{l'+3/2}(\partial\Omega)$ topology) of boundary illuminations $\{f_j\}_{1\leq j\leq J}$ for $J\geq 4$, we have that \eqref{eqn:normal} admits a unique solution. 
\end{theorem}

The next result provides a relatively quantitative way to show the injectivity for the case of sufficient small domains. In particular, the control we have over small domains is uniform. 

\begin{theorem}
Let $\Omega_0$ be a subset of $\Omega$ with $0\in \varepsilon\Omega_0:=\{\varepsilon x:\ x\in\Omega_0\}$ for $0<\varepsilon\leq\varepsilon_0$. Assume that $A_0$ is elliptic. Then there exists $\varepsilon_1>0$ such that if $\varepsilon<\varepsilon_1$, the system \eqref{eqn:normal} admits a unique solution satisfying 
\[\sum_{j=1}^J\|\deE_j\|_{H^{l'+2}(\Omega;\C^3)}+\|\des\|_{H^{l'+2}(\Omega)}+\|\den\|_{H^{l'+2}(\Omega)}\leq C_1\sum_{j=1}^J\|\Delta\deH_j\|_{H^{l'}(\Omega)}+C_3\|g\|_{H^{l'+1/2}(\partial\Omega; \R^{3J+2})}\]
where $C_1$ is independent of $\varepsilon$.
\end{theorem}
\begin{proof}{\color{red} Alternative proof?}
\end{proof}

As pointed out at the beginning of this section, the injectivity and stability estimates for the linearized operator $\mathcal A$ (or for the normal operator $\mathcal A^*\mathcal A$ in the case of over-determined problems), one would expect to be able to reconstruct the parameters locally for the original nonlinear problem, see \cite[\S 4]{BM-LINUMOT-13} for details of such analysis. We summarize the result here for our problem as
\begin{theorem}
Let $(\sigma, n, \wt E_1, \ldots, \wt E_J)\in \mathcal X_{l'}:=H^{l'+2}(\Omega)\times H^{l'+2}(\Omega)\times H^{l'+1}(\Omega;\R^{3J})$ for $l'>d/2$ be a point such that $A_0$ is elliptic and \eqref{eqn:normal} is injective. Denote the corresponding internal measurement by $\mathcal H=(0,\ldots,0, H_1,\ldots, H_J)$. Let $(\sigma^{(k)}, n^{(k)}, \{\wt E_j^{(k)}\}_j)\in \mathcal X_{l'}$ for $k=1,2$ and let $\mathcal H^{(k)}$ be the corresponding measurements. Then if $\mathcal H^{(1)}$ and $\mathcal H^{(2)}$ are sufficiently close to $\mathcal H$ in $H^{l'+2}(\Omega;\R^{3J})$, the following estimate holds
\begin{equation}
\|\sigma^{(1)}-\sigma^{(2)}\|_{H^{l'+2}(\Omega)}+\|n^{(1)}-n^{(2)}\|_{H^{l'+2}(\Omega)}\leq C\|\mathcal H^{(1)}-\mathcal H^{(2)}\|_{H^{l'+2}(\Omega)}.
\end{equation}
\end{theorem}

}

\cout{
\appendix
\section{Tools of multivariable calculus}\label{A}
Since the tools used in this paper are scattered in the literatures, to make the paper more self-contained, we summarized them in this Appendix. We start with collecting several basics required in the framework of differential forms (see \cite{Fe} for some details of differential forms),
and the basic functional spaces and properties for the current discussion of PDEs. 

\subsection{Differential forms}
For $ x \in \R^\ndim $ and $ \ndim \in \mathbb{N}\backslash\{0\}$, let $ T_x \R^\ndim$ denote the complex tangent space at $x$ with a base $ \{ \partial_{x^1}|_x, \dots, \partial_{x^\ndim}|_x \} $. Let $ T_x^\ast \R^\ndim $ denote the dual vector space with the dual base $ \{ dx^1|_x, \dots, dx^\ndim|_x \} $. We define on $ T_x^\ast \R^\ndim $ the inner product $ \inner{\centerdot}{\centerdot} $ given by the bilinear extension of $ \inner{dx^j|_x}{dx^k|_x} = \delta_{jk} $, with $ \delta_{\centerdot \centerdot} $ being the Kronecker delta. Note that it is not an Hermitian product.\\

Let $ \Lambda^l \R^\ndim $ with $ l \in \{ 0, 1, \dots, \ndim \} $ and $ \ndim \geq 2 $ denote the smooth complex vector bundle over $ \R^\ndim $ whose fiber at $ x \in \R^\ndim $ consists in $ \Lambda^l T_x^\ast \R^\ndim $, the $ l $-fold exterior product of $ T_x^\ast \R^\ndim $. By convention, a $ 0 $-fold is just a complex number and a $ 1 $-fold is an element of $ T_x^\ast \R^\ndim $. Let $ E $ be a non-empty subset of $ \R^\ndim $, an $ l $-form on $ E $ is a section $ u $ of $ \Lambda^l \R^\ndim $ over $ E $, so $ u(x) = u|_x \in \Lambda^l T_x^\ast \R^\ndim $ for any $ x \in E $. Any $ l $-form on $ E $ with $ l \in \{ 1, \dots, \ndim \} $ can be written as
\[ u = \sum_{\alpha \in S^l} u_\alpha\, dx^{\alpha_1} \wedge \dots \wedge dx^{\alpha_l} \]
with $ S^l = \{ (\alpha_1, \dots, \alpha_l) \in \{ 1, \dots, n \}^l : \alpha_1 < \dots < \alpha_l \} $ and $ u_\alpha : E \longrightarrow \C $. It is convenient to call $ u_\alpha $ with $\alpha \in S^l $ the component functions of $ u $.\\

The {\em exterior product} of an $ l $-form $ u $ and an $ m $-form $ v $, denoted by $ (u \wedge v) (x) = u|_x \wedge v|_x $ for any $ x \in E $, is defined by
\begin{equation}
u \wedge v = (-1)^{lm} v \wedge u. \label{id:anti-commutation}
\end{equation}
Recall that the exterior product is bilinear, associative and anti-commutative.

The {\em inner product} of two $ l $-forms at each $x\in E$ can be defined as the bilinear extension of
\[ \inner{(dx^{\alpha_1} \wedge \dots \wedge dx^{\alpha_l})|_x}{(dx^{\beta_1} \wedge \dots \wedge dx^{\beta_l})|_x} = \det \left(\inner{dx^{\alpha_j}|_x}{dx^{\beta_k}|_x}\right)_{jk}. \]
Associated to this inner product, we consider the norm satisfying $ |u|^2 = \inner{u}{\overline{u}} $.

Let now $ T_x^\ast \R^\ndim $ be endowed with an orientation. The {\em Hodge star operator} of an $ l $-form at each $ x \in E $ is defined as the linear extension of
\[ \ast (dx^{\alpha_1} \wedge \dots \wedge dx^{\alpha_l})|_x = (dx^{\beta_1} \wedge \dots \wedge dx^{\beta_{\ndim- l}})|_x, \]
where $ (\beta_1, \dots, \beta_{\ndim - l}) \in \{ 1, \dots , d \}^{\ndim - l} $ is chosen such that
\[ \{ dx^{\alpha_1}, \dots dx^{\alpha_l}, dx^{\beta_1}, \dots, dx^{\beta_{\ndim - l}} \} \]
is a positive base of $ T_x^\ast \R^\ndim $. The case of $ 0 $-forms and $ \ndim $-forms follows from
\[ \ast 1|_x = (dx^1 \wedge \dots \wedge dx^\ndim)|_x, \qquad \ast (dx^1 \wedge \dots \wedge dx^\ndim)|_x = 1|_x. \]
Now, if $ u $ and $ v $ are $ l $-forms on $ E $, then
\begin{align}
& \ast \ast u(x) = (- 1)^{l (n - l)} u(x) \label{eq:astast}, \\
& \inner{u}{v}(x) = \ast (u|_x \wedge \ast v|_x) = \ast (v|_x \wedge \ast u|_x), \label{eq:inner_astwedgeast} \\
& \inner{u}{v} = \inner{\ast u}{ \ast v}. \label{eq:astinner}
\end{align}

Let $ u $ be an $ l $-form on $ E $ and let $ v $ an $ m $-form on $ E $. The {\em vee product} of $ v $ and $ u $ at each point $ x \in E $ is defined as
\begin{equation}
(v \vee u)(x) = (-1)^{(\ndim + m - l)(l - m)} \ast (v|_x \wedge \ast u|_x). \label{def:veePRODUCT}
\end{equation}
Note that whenever $ m > l $, $ (v \vee u)(x) = 0 $ for all $ x \in E $. The vee product is bilinear but it is neither associative nor commutative. The product satisfies
\begin{equation}
\inner{w \wedge v}{u} = \inner{w}{v \vee u} \label{eq:veeWEDGE}
\end{equation}
for any $ k $-form $ w $.

\begin{proposition} \sl
If $ u $ and $ v $ are $ 1 $-forms and $ w $ is an $ l $-form with $ l \in \{ 0, \dots, \ndim \} $, then
\begin{equation}
u \vee (v \wedge w) - v \wedge (u \vee w) = (-1)^l \inner{u}{v} w. \label{eq:1v1wl}
\end{equation}
\end{proposition}

\bigskip

Let $ G $ be a non-empty open subset of $ \R^\ndim $. We denote by $C^k(G;\Lambda^l\R^\ndim)$ the set of $l$-forms whose component functions are $k$-times continuously differentiable in $G$. If $ u \in C^k (G;\Lambda^l \R^\ndim) $ for any positive integer $ k $, we say that $ u $ is smooth and we write $ u \in C^\infty (G;\Lambda^l \R^\ndim) $. The same convention extends to compactly supported $C_0^k(G;\Lambda^l\R^\ndim)$ forms and $C_0^\infty(G;\Lambda^l\R^\ndim)$ forms.
These definitions are naturally generalized to $0$-forms, where the conventional function space notations are also used.

The {\em exterior derivative} of $ u\in C^1(G;\Lambda^0\R^\ndim) $ is a $ 1 $-form defined by
\[ du|_x (X) = \Duality{X}{\chi_x u} \]
for each $ x \in G $ and $ X \in T_x \R^\ndim $. Here $ \chi_x \in C^\infty_0 (G) $ with $ \chi_x(x) = 1 $ on $G$.
The exterior derivative of $ u\in C^1(G;\Lambda^l\R^\ndim) $ with $ l \in \{ 1, \dots, \ndim \} $ is defined by
\[ du = \sum_{\alpha \in S^l} du_\alpha \wedge dx^{\alpha_1} \wedge \dots \wedge dx^{\alpha_l}. \]
It is easy to see that $ d(du) = 0 $ for any $u\in C^2(G;\Lambda^l\R^3)$ and
\begin{equation}
d (u \wedge v) = du \wedge v + (-1)^l u \wedge dv, \label{for:dWEDGE}
\end{equation}
for any $u\in C^1(G;\Lambda^l\R^3)$ and $v\in C^1(G;\Lambda^m\R^3)$.\\

Let $ L^1_\mathrm{loc} (E; \Lambda^l \R^\ndim) $ denote the space of locally integrable $ l $-forms (whose component functions are in $L^1_\mathrm{loc}(E)$) modulo those which vanish almost everywhere 
in $ E $. The space $ L^p (E; \Lambda^l \R^\ndim) $, with $ p \in [1, +\infty) $, consists of all $ u \in L^1_\mathrm{loc} (E; \Lambda^l \R^\ndim) $ such that 
\[ \norm{u}{}{L^p (E; \Lambda^l \R^d)}:=\int_E \inner{u}{\overline{u}}^{p/2} \, \dd x < +\infty. \]
Endowed with norm $\norm{\cdot}{}{L^p (E; \Lambda^l \R^d)}$, 
$ L^p (E; \Lambda^l \R^d) $ is a Banach space. Moreover, $ L^2 (E; \Lambda^l \R^d) $ is a Hilbert space.

Let $ u \in L^1_\mathrm{loc} (G; \Lambda^l \R^\ndim) $ with $ l \in \{ 1, \dots, \ndim \} $. We say that $ v \in L^1_\mathrm{loc} (G; \Lambda^{l - 1} \R^\ndim) $ is the formal {\em adjoint derivative} of $ u $, denoted by $ v = \delta u $, if
\[ \int_{G} \inner{v}{w} \, \dd x = \int_{G} \inner{u}{d w} \, \dd x \]
for any $ w \in C^1_0 (G; \Lambda^{l - 1} \R^\ndim) $. If $ u \in L^1_\mathrm{loc} (G; \Lambda^0 \R^\ndim) $, we define $ \delta u = 0 $. For all $ u\in L^1_\mathrm{loc} (G; \Lambda^l \R^\ndim) $ such that $ \delta u \in L^1_\mathrm{loc} (G; \Lambda^{l - 1} \R^\ndim) $ one has $ \delta (\delta u) = 0 $. Moreover, if $ u \in C^1 (G;\Lambda^l \R^\ndim) $, then
\begin{equation}
\delta u = (-1)^{\ndim(l + 1) + 1} \ast d \ast u. \label{eq:DELTAd}
\end{equation}

\begin{proposition} \sl
Consider $ u \in L^1_\mathrm{loc} (G;\Lambda^l \R^\ndim) $ and $ v \in C^1 (G;\Lambda^m \R^\ndim) $. If $ \delta u \in L^1_\mathrm{loc} (G;\Lambda^{l - 1} \R^\ndim) $, then $ \delta (v \vee u) \in L^1_\mathrm{loc} (G;\Lambda^{l - m - 1} \R^\ndim) $ and
\begin{equation}
\delta (v \vee u) = (- 1)^{l - m} dv \vee u + v \vee \delta u. \label{for:DELTAvee}
\end{equation}
\end{proposition}

\bigskip

We now present certain Sobolev spaces of forms, in which our PDEs are discussed. Let $ H^d (G; \Lambda^l \R^\ndim) $ (resp., $H^\delta(G;\Lambda^l\R^\ndim)$) denote the space of $ u \in L^2 (G; \Lambda^l \R^\ndim) $ such that $ du \in L^2 (G; \Lambda^{l + 1} \R^\ndim) $ (resp. $\delta u\in L^2(G; \Lambda^{l-1}\R^\ndim)$), endowed with the norm
\[ \norm{u}{}{H^d (G; \Lambda^l \R^\ndim)} = \left( \norm{u}{2}{L^2 (G; \Lambda^l \R^\ndim)} + \norm{du}{2}{L^2 (G; \Lambda^{l + 1} \R^\ndim)} \right)^{1/2} \]
\[\left(\mbox{resp., }\; \norm{u}{}{H^\delta (G; \Lambda^l \R^\ndim)} = \left( \norm{u}{2}{L^2 (G; \Lambda^l \R^\ndim)} + \norm{\delta u}{2}{L^2 (G; \Lambda^{l - 1} \R^\ndim)} \right)^{1/2}\right). \]
It is observed that $  H^d (G; \Lambda^l \R^\ndim)  $ (resp., $H^\delta(G;\Lambda^l\R^\ndim)$) is a Hilbert space and $ C^1_0 (\R^\ndim; \Lambda^l \R^\ndim) $ is dense in it. Let $ H^d_\mathrm{loc} (\R^\ndim; \Lambda^l \R^\ndim) $ (resp., $ H^\delta_\mathrm{loc} (\R^\ndim; \Lambda^l \R^\ndim) $)  denote the space of $ u \in L^1_\mathrm{loc} (\R^\ndim; \Lambda^l \R^\ndim) $ such that $ u|_U \in H^d (U; \Lambda^l \R^\ndim) $ (resp., $ u|_U \in H^\delta (U; \Lambda^l \R^\ndim) $) for any bounded non-empty open subset $ U $ in $ \R^\ndim $.


Finally, by a density argument it holds that
\begin{equation}
\int_{\R^\ndim} \inner{d u}{v} \, \dd x = \int_{\R^\ndim} \inner{u}{\delta v} \, \dd x \label{eq:weakd-delta}
\end{equation}
for all $ u \in H^d (\R^\ndim; \Lambda^{l - 1} \R^\ndim) $ and $ v \in H^\delta (\R^\ndim; \Lambda^l \R^\ndim) $ with $ l \in \{ 1, \dots, \ndim \} $.\\

\subsection{Some facts and identities related to 1-forms}\label{A:iden}


We define the operator $(u\vee d)
$ on $v\in C^1(G;\Lambda^1\R^\ndim)$ as 
\[(u\vee d)v:=\sum_k\Big(\sum_j u_j \partial_j v_k\Big)~dx^k
\]
Then we have the following two key identities for $u, v\in C^1(G;\Lambda^1\R^\ndim)$
\begin{equation}\label{eqn:d-vee}
d(u\vee v)=(u\vee d) v+(v\vee d) u+u\vee dv+v\vee du,
\end{equation}
\begin{equation}\label{eqn:del-wedge}
\delta(u\wedge v)=-(u\vee d)v+(v\vee d)u+v\wedge \delta u-u\wedge \delta v.
\end{equation}
The {\em Hodge laplacian} on $C^2(G;\Lambda^l\R^\ndim)$ is given by
$\Delta_H:=-(d\delta+\delta d)$. 
Suppose $\zeta\in \Lambda^1\R^\ndim$ is a constant complex-valued 1-form, also viewed as a vector in $\C^\ndim$. The conjugate exterior derivative and the conjugate adjoint derivative with $e^{ix\cdot\zeta}$ on $u\in C^1(G;\Lambda^l\R^\ndim)$ are given by
\[\wt d:=e^{-ix\cdot\zeta}\circ d\circ e^{ix\cdot\zeta}=d+i\zeta\wedge,\]
\[\wt \delta :=e^{-ix\cdot\zeta}\circ \delta\circ e^{ix\cdot\zeta}=\delta+(-1)^li\zeta\vee.\]
Therefore, for $u\in C^2(G;\Lambda^1\R^\ndim)$, the conjugate Hodge Laplacian is
\[\begin{split}
-\wt \Delta_H=&\wt{d\delta}+\wt{\delta d}=\wt d\wt \delta+\wt\delta \wt d \qquad\qquad
\\
=&(d+i\zeta\wedge)(\delta-i\zeta\vee)+(\delta+i\zeta\vee)(d+i\zeta\wedge)\\
=&d\delta+\delta d+\Big[(i\zeta\wedge)\delta-d(i\zeta\vee)+(i\zeta\vee)d+\delta(i\zeta\wedge)\Big]+\zeta\wedge\zeta\vee-\zeta\vee\zeta\wedge\\
=&-\Delta_H-2(i\zeta\vee d)+\langle \zeta,\zeta\rangle
\end{split}\]
where the last equality is by \eqref{eqn:d-vee}, \eqref{eqn:del-wedge} and \eqref{eq:1v1wl}.

\subsection{Fourier transform}\label{appdx:FT}
An $l$-form $u$ with $l\in\{0,\ldots,\ndim\}$ is said to belong to the Schwartz space $\mathcal{S}(\R^\ndim;\Lambda^l\R^\ndim)$ if its component functions $u_\alpha$ ($\alpha\in S^l$) are in the Schwartz space $\mathcal{S}(\R^\ndim)$. We can define the space $\mathcal{S}'(\R^\ndim;\Lambda^l\R^\ndim)$ of $l$-form-valued tempered distributions similarly. The Fourier Transform of $u\in\mathcal S(\R^\ndim;\Lambda^l\R^\ndim)$ is then defined by
\[\widehat{u}=\sum_{\alpha\in S^l}\widehat{u_\alpha} d\xi^{\alpha_1}\wedge\ldots\wedge d\xi^{\alpha_l}\in \mathcal{S}(\R^\ndim;\Lambda^l\R^\ndim).\]
The Fourier Transform $\widehat{u}$ for $u\in\mathcal{S}'(\R^\ndim;\Lambda^l\R^\ndim)$ can be defined by duality. One can easily verify the following identities for $u\in \mathcal{S}(\R^\ndim;\Lambda^l\R^\ndim)$
\begin{equation}\label{eq:sym-d-delta}
\widehat{du}(\xi)=i\xi\wedge \widehat{u}(\xi),\qquad \widehat{\delta u}(\xi)=i(-1)^{l} \xi\vee\widehat{u}(\xi)\end{equation}
where $\xi\in\R^\ndim\backslash\{0\}$ can be viewed as a 1-form. For $u, v\in L^2(\R^\ndim;\Lambda^l\R^\ndim)$, we have
\begin{equation}\label{eq:FTdual}\int_{\R^\ndim}\langle u,\overline{v}\rangle~\dd x=\int_{\R^\ndim}\langle \widehat{u},\overline{\widehat{v}}\rangle~\dd x,\end{equation}
making Fourier Transform a unitary map on $L^2(\R^\ndim;\Lambda^l\R^\ndim)$.

Furthermore, it is easy to verity that the symbol of $-\wt\Delta_H$ is $|\xi|^2+2\inner{\zeta}{\xi}+\inner{\zeta}{\zeta}$ by \eqref{eq:sym-d-delta}.

}


\begin{thebibliography}{99}





\bibitem{ADN-ii} {\sc S. Agmon, A. Douglis and L. Nirenberg}, {\em Estimates near the boundary for solutions of elliptic partial differential equations satisfying general boundary conditions II}, Comm. Pure and Appl. Math. {\bf 17} (1964), pp.~35--92.

\bibitem{ABCTF-SIAP-08}
{\sc H.~Ammari, E.~Bonnetier, Y.~Capdeboscq, M.~Tanter, and M.~Fink}, {\em
  Electrical impedance tomography by elastic deformation}, SIAM J. Appl. Math.,
 {\bf 68} (2008), pp.~1557--1573.
 
 \bibitem{B-Irvine-13}
{\sc G.~Bal}, {\em {Hybrid Inverse Problems and Systems of Partial Differential
  Equations}}, {arXiv:1210.0265}.
  
\bibitem{B-APDE-13}
\leavevmode\vrule height 2pt depth -1.6pt width 23pt, {\em {Cauchy problem for
  Ultrasound modulated EIT}}, To appear in Analysis \& PDE. {\tt
  arXiv:1201.0972v1},  (2013).

\bibitem{BBMT-13}
{\sc G.~Bal, E.~Bonnetier, F.~Monard, and F.~Triki}, {\em Inverse diffusion
  from knowledge of power densities}, Inverse Problems and Imaging, 7(2)
  (2013), pp.~353--375.

\bibitem{BGM-IP-13}
{\sc G.~Bal, C.~Guo, and F.~Monard}, {\em Inverse anisotropic conductivity from
  internal current densities}, Submitted.

\bibitem{BGM-IPI-13}
\leavevmode\vrule height 2pt depth -1.6pt width 23pt, {\em Linearized internal
  functionals for anisotropic conductivities}, Submitted.

\bibitem{BM-LINUMOT-13}
{\sc G.~Bal and S.~Moskow}, {\em Local inversions in ultrasound modulated
  optical tomography}, Submitted, {\tt arXiv:1303.5178},  (2013).

\bibitem{BR-IP-11}
{\sc G.~Bal and K.~Ren}, {\em {Multi-source quantitative PAT in diffusive
  regime}}, Inverse Problems, 27(7) (2011), p.~075003.

\bibitem{BRUZ-IP-11}
{\sc G.~Bal, K.~Ren, G.~Uhlmann, and T.~Zhou}, {\em Quantitative
  thermo-acoustics and related problems}, Inverse Problems, 27(5) (2011),
  p.~055007.

\bibitem{BS-PRL-10}
{\sc G.~Bal and J.~C. Schotland}, {\em {Inverse Scattering and Acousto-Optics
  Imaging}}, Phys. Rev. Letters, 104 (2010), p.~043902.

\bibitem{BU-CPAM-12}
{\sc G.~Bal and G.~Uhlmann}, {\em Reconstruction of coefficients in scalar
  second-order elliptic equations from knowledge of their solutions}, to appear
  in CPAM {arXiv:1111.5051}.

\bibitem{BU-IP-10}
\leavevmode\vrule height 2pt depth -1.6pt width 23pt, {\em Inverse diffusion
  theory for photoacoustics}, Inverse Problems, 26(8) (2010), p.~085010.

\bibitem{BU-AML-12}
\leavevmode\vrule height 2pt depth -1.6pt width 23pt, {\em Reconstructions for
  some coupled-physics inverse problems}, Applied Math. Letters, 25(7) (2012),
  pp.~1030--1033.


\bibitem{Br} {\sc R.~Brown}, {\em Global Uniqueness in the Impedance-Imaging Problem for Less Regular Conductivities}, SIAM J. Math. Anal. {\bf 27} (1996),  pp. 1049--1056.

\bibitem{CFGK-SJIS-09}
{\sc Y.~Capdeboscq, J.~Fehrenbach, F.~{de Gournay}, and O.~Kavian}, {\em
  Imaging by modification: numerical reconstruction of local conductivities
  from corresponding power density measurements}, SIAM J. Imaging Sciences, 2
  (2009), pp.~1003--1030.

\bibitem{colton-kress-98}
{\sc D.~L. Colton and R.~Kress}, {\em Inverse acoustic and electromagnetic
  scattering theory}, Springer Verlag, Berlin, 1998.


\bibitem{CP} {\sc D.~Colton and L. P\"aiv\"arinta}, {\em The Uniqueness of a solution to an inverse scattering problem for electromagnetic waves}, Arch. Rational Mech. Anal., {\bf 119} (1992), pp. 59--70.

\bibitem{dlen3}
{\sc R.~Dautray and J.-L. Lions}, {\em {Mathematical Analysis and Numerical
  Methods for Science and Technology. Vol.3}}, Springer Verlag, Berlin, 1993.


\bibitem{DN} {\sc A. Douglis and L. Nirenberg}, {\em Interior estimates for elliptic systems of partial differential equations}, Comm. Pure. Appl. Math., {\bf 8} (1955), pp. 503--538.




\bibitem{FR-CRC-09}
{\sc D.~Finch and Rakesh.}, {\em Recovering a function from its spherical mean
  values in two and three dimensions}, in Photoacoustic imaging and
  spectroscopy L. H. Wang (Editor), CRC Press,  (2009).

\bibitem{FPR-JMA-04}
{\sc S.~K. Finch, D.~Patch and Rakesh.}, {\em Determining a function from its
  mean values over a family of spheres}, SIAM J. Math. Anal., 35 (2004),
  pp.~1213--1240.
 
\bibitem{GS-SIAP-09}
{\sc B.~Gebauer and O.~Scherzer}, {\em Impedance-acoustic tomography}, SIAM J.
  Applied Math., 69(2) (2009), pp.~565--576.


\bibitem{HT} {\sc B. Haberman and D. Tataru}, {\em Uniqueness in Calderon�s problem with Lipschitz conductivities}, Duke Math. J., {\bf 162} (2013), pp. 497--516.

\bibitem{HSS-MMAS-05}
{\sc M.~Haltmeier, T.~Schuster, and O.~Scherzer}, {\em Filtered backprojection
  for thermoacoustic computed tomography in spherical geometry}, Math. Methods
  Appl. Sci., 28 (2005), pp.~1919--1937.

\bibitem{HKN-IP-08}
{\sc Y.~Hristova, P.~Kuchment, and L.~Nguyen}, {\em Reconstruction and time
  reversal in thermoacoustic tomography in acoustically homogeneous and
  inhomogeneous media}, Inverse Problems, 24 (2008), p.~055006.


\bibitem{KK-EJAM-08}
{\sc P.~Kuchment and L.~Kunyansky}, {\em Mathematics of thermoacoustic
  tomography}, Euro. J. Appl. Math., 19 (2008), pp.~191--224.

\bibitem{KK-AET-11}
\leavevmode\vrule height 2pt depth -1.6pt width 23pt, {\em {2D and 3D
  reconstructions in acousto-electric tomography}}, Inverse Problems, 27
  (2011), p.~055013.

\bibitem{KS-IP-12}
{\sc P.~Kuchment and D.~Steinhauer}, {\em Stabilizing inverse problems by
  internal data}, Inverse Problems, 28 (2012), p.~084007.

\bibitem{Lo} {\sc Ya. B. Lopatinskii}, {\em On a method of reducing boundary problems for a system of differential equations of elliptic type to regular equations}, Ukrain. Mat. Z., {\bf 5} (1953), pp 123--151.

\bibitem{MB-aniso-13}
{\sc F.~Monard and G.~Bal}, {\em Inverse anisotropic conductivity from power
  density measurements in dimensions $n\geq3$}, submitted.

\bibitem{MB-IP-12}
\leavevmode\vrule height 2pt depth -1.6pt width 23pt, {\em Inverse anisotropic
  diffusion from power density measurements in two dimensions}, Inverse
  Problems, 28 (2012), p.~084001.

\bibitem{MB-IPI-12}
\leavevmode\vrule height 2pt depth -1.6pt width 23pt, {\em Inverse diffusion
  problem with redundant internal information}, Inverse Problems and Imaging,
  6(2) (2012), pp.~289--313.




\bibitem{Na} {\sc A.~Nachman}, {\em Reconstruction from boundary measurements}, Annals of Math. {\bf 128} (1988), pp. 531--576.
\bibitem{OS} {\sc P.~Ola and E.~Somersalo}, {\em Electromagnetic nverse problems and generalized sommerfeld potentials}, SIAM J. Appl. Math., {\bf 56} (1996), pp. 1129--1145.
\bibitem{OPS} {\sc P.~Ola, L.~P\"aiv\"arinta and E.~Somersalo}, {\em An inverse boundary value problem in electromagnetics}, Duke Math. J., {\bf 70} (1993), pp. 617--653.

\bibitem{PS-IP-07}
{\sc S.~Patch and O.~Scherzer}, {\em Photo- and thermo- acoustic imaging},
  Inverse Problems, 23 (2007), pp.~S1--10.



\bibitem{So} {\sc V.~A.~Solonnikov}, {\em Overdetermined elliptic boundary-value problems}, J. Math. Sci., {\bf 1} (1973), pp. 477--512.
\bibitem{StU} {\sc P.~Stefanov and G.~Uhlmann}, {\em Linearizing non-linear inverse problems and an application to inverse backscattering}, J. Funct. Anal., {\bf 256} (2009), pp. 2842--2866.

\bibitem{SU-IP-09}
{\sc P.~Stefanov and G.~Uhlmann}, {\em Thermoacoustic tomography with variable
  sound speed}, Inverse Problems, 25 (2009), p.~075011.

\bibitem{SU-IO-12}
\leavevmode\vrule height 2pt depth -1.6pt width 23pt, {\em Multi-wave methods
  by ultrasounds}, Inside out, Cambridge University Press (G. Uhlmann, Ed.),
  (2012).
\bibitem{SU} {\sc J.~Sylvester and G.~Uhlmann}, {\em A global uniqueness theorem for an inverse boundary value problem}, Annals of Math. {\bf 125} (1987), pp. 153--169.



\bibitem{XW-RSI-06}
{\sc M.~Xu and L.~V. Wang}, {\em Photoacoustic imaging in biomedicine}, Rev.
  Sci. Instr., 77 (2006), p.~041101.





























\end{thebibliography}

\end{document}